\documentclass[11pt]{amsart}
\usepackage{amsmath, amsthm, amsfonts, amssymb}
\usepackage{enumerate}
\usepackage[bookmarks=false]{hyperref}
\usepackage{xcolor}
\usepackage{verbatim}

\usepackage{comment}

\newtheorem{thm}{Theorem}[section]

\newtheorem{col}[thm]{Corollary}
\newtheorem{lem}[thm]{Lemma}

\newtheorem*{claim}{Claim}
\theoremstyle{definition}
\newtheorem{defn}[thm]{Definition}
\newtheorem{defn/lem}[thm]{Definition/Lemma}
\newtheorem{rmk}[thm]{Remark}

\newtheorem{examps}[thm]{Examples}

\newcommand{\C}{\ensuremath{\mathbb{C}}}

\newcommand{\M}{\ensuremath{\mathbb{M}}}
\newcommand{\N}{\ensuremath{\mathbb{N}}}
\newcommand{\Z}{\ensuremath{\mathbb{Z}}}

\newcommand{\T}{\ensuremath{\mathbb{T}}}
\newcommand{\cH}{\ensuremath{\mathcal{H}}}
\newcommand{\cU}{\ensuremath{\mathcal{U}}}
\newcommand{\cT}{\ensuremath{\mathcal{T}}}

\newcommand{\sk}{^{(k)}}
\newcommand{\si}{^{(\infty)}}
\newcommand{\sr}{^{(r)}}

\newcommand{\Span}{\mathrm{span}}
\newcommand{\HNN}{\mathrm{HNN}}
\newcommand{\Aut}{\mathrm{Aut}}

\begin{document}

\title[Toeplitz exactness for strong convergence]{Toeplitz exactness for strong convergence}

\author[D. Gao]{David Gao}
\address{Department of Mathematical Sciences, UCSD, 9500 Gilman Dr, La Jolla, CA 92092, USA}\email{weg002@ucsd.edu}\urladdr{https://sites.google.com/ucsd.edu/david-gao}

\author[S. Kunnawalkam Elayavalli]{Srivatsav Kunnawalkam Elayavalli}
\address{Department of Mathematics, UMD, Kirwan Hall, Campus Drive, MD 20770, USA}\email{sriva@umd.edu}
\urladdr{https://sites.google.com/view/srivatsavke/home}

\begin{abstract}
    We prove a new ``\emph{Toeplitz exactness}'' theorem for strong convergence. This is a machine to upgrade strong convergence in the general setting of $C^\ast$-correspondences, and has several applications.
\end{abstract}

\maketitle

\begin{center}
    \textit{for Marius and Leonel}
\end{center}

\section{Introduction}

Recent years have witnessed substantial advancements in the programs of strong convergence of unitary representations \cite{HaagerupThorbjornsen2005, vanhandel2025strongconvergenceshortsurvey, magee2025strong} and the free structure of $C^\ast$-algebras \cite{robertselfless, amrutam2025strictcomparisonreducedgroup, ozawa2025proximalityselflessnessgroupcalgebras}. The current paper is particularly inspired by the recent development \cite{gao2026newsourcepurelyfinite} that found a new connection. Our main result here reveals a general ``\emph{Toeplitz exactness}'' phenomenon that upgrades strong convergence to Toeplitz-Pimsner algebras associated to general $C^\ast$-correspondences, constructed in Pimsner's work \cite{pimsner1997class}. This is \emph{not} to be confused with preservation of ``on the nose exactness'' for Toeplitz-Pimsner algebras in the sense of \cite{dykema2001exactness}. Our main result is the following:

\begin{thm}\label{thm: main theorem}
    Fix index sets $I$ and $J$. Let $(X_i\sk)_{i \in I}$ be a tuple in a $C^\ast$-algebra $A\sk$ and $(\xi_j\sk)_{j \in J}$ be a tuple in a $C^\ast$-correspondence $\cH\sk$ over $A\sk$, for each $k \in \N \cup \{\infty\}$. Assume further that $A\si$ is generated by $(X_i\si)_{i \in I}$ and $\cH\si$ is generated by $(\xi_j\si)_{j \in J}$ as a $C^\ast$-correspondence over $A\si$. If $(X_i\sk; \xi_j\sk)_{i \in I, j \in J}$ converges strongly to $(X_i\si; \xi_j\si)_{i \in I, j \in J}$, then $(X_i\sk, T_{\xi_j}\sk)_{i \in I, j \in J}$ in $\cT(\cH\sk)$ converges strongly to $(X_i\si, T_{\xi_j}\si)_{i \in I, j \in J}$ in $\cT(\cH\si)$.
\end{thm}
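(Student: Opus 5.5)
The plan is to fix a free ultrafilter $\omega$ on $\N$ and show that $\lim_{k\to\omega}\|P\sk\| = \|P\si\|$ for every $*$-polynomial $P$ in the variables $X_i, T_{\xi_j}$. Here $P\sk$ is the evaluation of $P$ at $(X_i\sk, T_{\xi_j}\sk)$ in $\cT(\cH\sk)$. Since $\omega$ is arbitrary, this gives genuine convergence of norms. The natural setting is the ultraproducts $A^\omega = \prod_k A\sk / c_\omega$, $\cH^\omega = \prod_k \cH\sk/c_\omega$ (a $C^\ast$-correspondence over $A^\omega$) and $\cT^\omega = \prod_k \cT(\cH\sk)/c_\omega$.

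\textbf{Step 1: embed the limit data into the ultraproduct.} First I would unpack the hypothesis of strong convergence of $(X_i\sk;\xi_j\sk)$. I will take it to mean convergence of norms of all expressions built from $X_i$, from module elements $a\xi_j b$ with $a,b$ polynomials in the $X_i$, and from inner products of such elements. From it I would deduce the following.
\begin{itemize}
\item[(a)] $X_i\si \mapsto (X_i\sk)_\omega$ extends to an injective $*$-homomorphism $\iota: A\si \to A^\omega$.
\item[(b)] $a\xi_j b \mapsto (a\sk \xi_j\sk b\sk)_\omega$ extends linearly to a map $V:\cH\si\to \cH^\omega$ with $\langle V\eta, V\zeta\rangle = \iota(\langle \eta,\zeta\rangle)$ and $V(a\eta b) = \iota(a)V(\eta)\iota(b)$.
\end{itemize}
For (b), well-definedness and isometry follow because $\|\sum_m a_m\xi_{j_m} b_m\|^2$ is the norm of an inner-product polynomial, whose norms converge by hypothesis. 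Then $V$ extends to the closure since $\cH\si$ is generated by the $\xi_j\si$. This is the step where I must be careful that the paper's definition of strong convergence for correspondence tuples really controls these inner-product polynomials. I expect that to be essentially built into the definition.

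\textbf{Step 2: upper bound via universality.} I would define $\pi: A\si\to \cT^\omega$ by $\pi(a) = (a\sk)_\omega$, i.e.\ $\iota$ followed by the inclusion $A^\omega\subseteq\cT^\omega$. I would define $t:\cH\si\to\cT^\omega$ by $t(\eta) = (T\sk_{\eta\sk})_\omega$, where $(\eta\sk)_\omega = V\eta$. Since $\|T_\eta\| = \|\eta\|$ coordinatewise, $t$ is well defined and contractive. The Toeplitz relations $t(\eta)^*t(\zeta) = \pi(\langle\eta,\zeta\rangle)$ and $\pi(a)t(\eta)\pi(b) = t(a\eta b)$ hold coordinatewise in each $\cT(\cH\sk)$, so they pass to the ultraproduct. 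The universal property of $\cT(\cH\si)$ from Pimsner's construction then yields a $*$-homomorphism $\Phi:\cT(\cH\si)\to\cT^\omega$ with $\Phi(P\si) = (P\sk)_\omega$. Hence $\lim_\omega\|P\sk\| \le \|P\si\|$.

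\textbf{Step 3: lower bound via Fock spaces.} Each $\cT(\cH\sk)$ acts faithfully on its Fock correspondence $\mathcal F(\cH\sk) = \bigoplus_n (\cH\sk)^{\otimes n}$. Tensoring the isometric embeddings $V$ gives an isometric inclusion of $\mathcal F(\cH\si)$ into the ultraproduct $\prod_k \mathcal F(\cH\sk)/c_\omega$. I would check that this subspace is invariant under the ultraproduct operators $(X_i\sk)_\omega$, $(T_{\xi_j}\sk)_\omega$ and their adjoints. For creation operators and left multiplication this is clear. For $T_\xi^*$, applied to $\eta_1\otimes\cdots\otimes\eta_n$ it produces $\langle\xi,\eta_1\rangle\eta_2\otimes\cdots$, and this stays in the image because $\iota(\langle\xi,\eta_1\rangle)$ lies in $\iota(A\si)$. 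The compression to $\mathcal F(\cH\si)$ is then exactly the Fock representation of $\cT(\cH\si)$, which is faithful by Pimsner. Therefore $\lim_\omega\|P\sk\| \ge \|(P\sk)_\omega|_{\mathcal F(\cH\si)}\| = \|P\si\|$. Equivalently, $\Phi$ is injective.

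I expect Step 3 to be the main obstacle, in particular the bookkeeping needed to realize the ultraproduct of Fock spaces as a correspondence on which $\cT^\omega$ acts, and the identification of the compression with the Fock representation. An alternative for this step would be a gauge-invariant uniqueness argument (Fowler--Raeburn/Katsura), but the ultraproduct lacks an obvious gauge action. So the concrete Fock-space compression is the approach I would try first.
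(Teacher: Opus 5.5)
Your proof is correct. It agrees with the paper up to the injectivity step and then takes a different route.

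\textbf{Where you agree with the paper.} Steps 1--2 are the first half of the paper's argument. There too one embeds $A\si$ into $\prod_\cU A\sk$, uses Remark \ref{rmk: ultraprod ver of strong conv} to get $(T_{\xi_{j_1}\sk}^\ast a\sk T_{\xi_{j_2}\sk})_\cU = \pi(\langle \xi_{j_1}\si, a\xi_{j_2}\si\rangle)$, and obtains $\tilde\pi\colon \cT(\cH\si)\to\prod_\cU\cT(\cH\sk)$ from universality. Your worry in Step 1 is settled by condition (2) of the definition. Choosing $P$ close to $\langle \xi_{j_1}\si, Q(X\si)\xi_{j_2}\si\rangle$ shows that $\iota$ of this inner product is represented by $(\langle \xi_{j_1}\sk, Q(X\sk)\xi_{j_2}\sk\rangle)_\omega$. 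Multiplicativity of $\iota$ then controls $\|\sum_m a_m\xi_{j_m}b_m\|^2$.

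\textbf{The paper's injectivity step.} The paper applies Theorem \ref{thm: gauge-invariant uniqueness theorem} directly inside the ultraproduct. Contrary to your closing remark, a gauge action is available where it is needed. The coordinatewise gauge automorphisms preserve the range of $\tilde\pi$, and they are point-norm continuous there because they are continuous on the generators and isometric. Condition \eqref{eqn: intersection condition} is checked with the coordinatewise canonical expectations $E\sk$, which fix $\pi(A\si)$ and annihilate every $aS_{j_1}bS_{j_2}^\ast c$.

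\textbf{Your injectivity step.} Write $W$ for your isometric embedding of $\mathcal F(\cH\si)$ into $\prod_\omega \mathcal F(\cH\sk)$ and $\lambda$ for the Fock representation. You only need faithfulness of $\lambda$ at level $\infty$; at finite $k$ contractivity suffices. You also do not need a compression in the Hilbert-module sense, and this is fortunate, since the image of $W$ is only an $\iota(A\si)$-module and need not be complemented. The intertwining $W\lambda(P\si) = (P\sk)_\omega W$, checked on elementary tensors for $X_i$, $T_{\xi_j}$ and $T_{\xi_j}^\ast$, together with $\|W\eta\|=\|\eta\|$, already gives $\|\lambda(P\si)\|\le\lim_\omega\|P\sk\|$.

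\textbf{What each route buys.} For an arbitrary correspondence, faithfulness of the Fock representation on the universal $\cT(\cH\si)$ is the Fowler--Raeburn theorem. That theorem is itself an instance of the same uniqueness theorem, applied with the grading unitaries and the vacuum compression. So your route uses this machinery once, concretely, at the limit level, and pays for it with tensor-power bookkeeping; the paper's route is shorter. A side benefit of yours is that $W$ preserves the grading. Since the canonical expectation is the compression to the degree-zero summand, this gives a direct handle on the expectation-compatible statement in Remark \ref{rmk: Toeplitz prob space strong conv}.
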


Our approach uses the \emph{gauge invariant-uniqueness} theorem of Fowler--Muhly--Raeburn \cite{FowlerMuhlyRaeburn2003} for Toeplitz-Pimsner algebras associated to arbitrary $C^\ast$-correspondences, which completed a line of research beginning in the visionary work of Pimsner \cite{pimsner1997class}. We now demonstrate the utility of Theorem \ref{thm: main theorem} by quickly listing a few applications. Concerning the next result, recall the free exactness results of Skoufranis \cite{skoufranis2015notion} and Pisier \cite{PisierRDP} (see also Shlyakhtenko's appendix in \cite{male2011norm}). Recall that, for instance, Pisier's \cite[Theorem 7.1]{PisierRDP} says roughly that taking amalgamated free products over a fixed subalgebra preserves strong convergence. The following corollary recovers and generalizes all these results with a new proof:

\begin{col}\label{col: amalgamated free products}
    Fix an index set $J$ and an index set $I_j$ for each $j \in J$. Let $D\sk \subset A_j\sk$ be an inclusion of $C^\ast$-algebras with nondegenerate condition expectation $E_j\sk$; and let $(X_{i, j}\sk)_{i \in I_j}$ be a tuple in $A_j\sk$, for each $j \in J$ and $k \in \N \cup \{\infty\}$. Assume further that $A_j\si$ is generated by $(X_{i, j}\si)_{i \in I_j}$ for every $j \in J$. If $(X_{i, j}\sk; E_j\sk)_{i \in I_j} \to (X_{i, j}\si; E_j\si)_{i \in I_j}$ strongly for all $j \in J$, consistently on $D\sk$, then $(X_{i, j}\sk; \ast_{D\sk}^{j \in J} E_j\sk)_{i \in I_j, j \in J}$ in $\ast_{D\sk} A_j\sk$ converges strongly to $(X_{i, j}\si; \ast_{D\si}^{j \in J} E_j\si)_{i \in I_j, j \in J}$ in $\ast_{D\si} A_j\si$.
\end{col}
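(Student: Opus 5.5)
We would deduce the corollary from Theorem \ref{thm: main theorem} by realizing the amalgamated free product as a corner, or a subalgebra, of a Toeplitz--Pimsner algebra built from the data $(A_j\sk, E_j\sk)$. The model is the standard fact that reduced amalgamated free products embed into Toeplitz algebras of suitable correspondences. We will use the analogue of Pimsner's result for free products: the reduced amalgamated free product sits inside $\cT(\cH)$ for a correspondence $\cH$ over the coefficient algebra $B = \bigoplus_{j} A_j$, or more conveniently over $D$ with left actions of all $A_j$ combined.

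Concretely, fix $k$. Let $L^2(A_j\sk, E_j\sk)$ be the GNS right Hilbert $D\sk$-module, with its canonical vector $1_j$, and write $\mathring{L}^2_j$ for the orthocomplement of $D\sk 1_j$. The reduced amalgamated free product acts on the free product module
\[
\mathcal{F}\sk = D\sk \oplus \bigoplus_{n\ge 1}\ \bigoplus_{j_1\neq \cdots \neq j_n} \mathring{L}^2_{j_1}\otimes_{D\sk}\cdots\otimes_{D\sk} \mathring{L}^2_{j_n}.
\]
This module is a closed submodule of a full Fock module $\mathcal{F}(\cH\sk)$, where the correspondence $\cH\sk = \bigoplus_j L^2(A_j\sk,E_j\sk)$ is taken over $A\sk := \bigoplus_j A_j\sk$ (unitized, or $\ell^\infty$-type). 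To set this up, we take as the tuple in $A\sk$ the $X_{i,j}\sk$ together with the projections onto the summands, and as vectors $\xi_j\sk = 1_j$.

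The hypothesis that $(X_{i,j}\sk;E_j\sk)$ converges strongly consistently on $D\sk$ should translate, essentially by definition, into strong convergence of $(X\sk;\xi\sk)$ in the sense required by Theorem \ref{thm: main theorem}. Here convergence of the inner products $\langle \xi, P(X)\xi\rangle = E_j(P(X))$ is handled in the appropriate norm sense, and the generation hypotheses hold. The theorem then gives strong convergence in $\cT(\cH\sk)$ of $(X\sk, T_{\xi_j}\sk)$.

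The next step is to exhibit each reduced amalgamated free product operator $\lambda(X_{i,j})$ as an element of $\cT(\cH\sk)$, or a compression of one. We would write it as a fixed $\ast$-polynomial in $X\sk$ and $T_{\xi}\sk$, compressed by a fixed projection $Q\sk$ onto $\mathcal{F}\sk$, with all of these given uniformly in $k$. The idea is that the creation operators $T_{1_j}$ together with the left actions allow one to implement the Voiculescu decomposition $x = E_j(x) + \mathring{x}$ word by word. Since compressions by a projection expressible uniformly in the generators preserve norms of polynomials, strong convergence passes to the compressed tuple. Finally, the state $\ast E_j\sk$ is the vacuum expectation $\langle \Omega, \cdot\,\Omega\rangle$ compressed to $D\sk$, so convergence of the conditional expectation data follows as well.

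We expect the main obstacle to be this last identification: showing that $\mathcal{F}\sk\subset\mathcal{F}(\cH\sk)$ is cut out by a projection $Q\sk$ that lies in (or is a limit of polynomials in) $\cT(\cH\sk)$ uniformly in $k$, and that $Q\lambda(x)Q$ agrees with the free product representation. Our first attempt would be to use the gauge action and the alternating-word structure: $Q$ should be a product of projections $1-T_{\xi_j}T_{\xi_j}^*$-type terms on consecutive letters. If that fails, the fallback is to change the correspondence to $\bigoplus_j \mathring{L}^2_j$ with $A\sk$ acting through $D\sk$ twisted by the summand projections, so that $\mathcal{F}\sk$ becomes literally a Fock module.
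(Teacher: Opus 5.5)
Your overall strategy (realize the amalgamated free product inside a Toeplitz--Pimsner algebra, apply Theorem~\ref{thm: main theorem}, pull norms back) is the paper's, and you correctly locate the crux. But the crux is left open, and the form in which you hope to settle it does not work.

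First, $L^2(A_j\sk,E_j\sk)$ carries only a right $D\sk$-action, so $\bigoplus_j L^2(A_j\sk,E_j\sk)$ is not a $C^\ast$-correspondence over $\bigoplus_j A_j\sk$. The asserted inclusion of the alternating-word module $\mathcal{F}\sk$ into its Fock module is therefore not meaningful as stated.

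More seriously, your argument needs, for every $k$, an exact identity $\lambda(X_{i,j}\sk)=Q\sk P_{i,j}(X\sk,T\sk)Q\sk$, with $Q\sk$ and $P_{i,j}$ fixed $\ast$-polynomials and with the compression multiplicative. The projection you describe, a product of $(1-TT^\ast)$-type factors on consecutive letters, has to impose a centering condition at every tensor position. It is an infinite product, at best a strong-operator limit of polynomials, and strong convergence gives no control over such limits. Multiplicativity is not automatic either: the coefficient algebra acts on a Fock module by multiplying the first tensor factor, whereas $\lambda(a)$ also splits off $E_j(a\mathring{x})$ and shortens the word. Your fallback over $D\sk$ fails in the other direction: the part $\mathring{x}\otimes w\mapsto(a\mathring{x}-E_j(a\mathring{x}))\otimes w$ of $\lambda(a)$ is in general not in $\cT(\bigoplus_j\mathring{L}^2_j)$.

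The link the paper actually uses, Theorem~\ref{thm: Toeplitz iso with AFP}, is of a different nature, and this shapes the whole proof. The map $\Psi\sk\colon A_1\sk\ast_{D\sk}A_2\sk\to p\sk\cT(E_1\sk\oplus E_2\sk)p\sk$ is only a unital complete order isomorphism (hence isometric), not a $\ast$-homomorphism. It is a fixed expression $\psi_{j_1}(a_1)\cdots\psi_{j_n}(a_n)$, with $\psi_j(a)=pap+uau$, in the Toeplitz generators only on reduced words.

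So the paper works with a dense set of elements built from words whose letters $f_l(X\si)$ are centered at $k=\infty$. At finite $k$ these letters are no longer centered. The ingredient missing from your plan is the estimate $\|E_j\sk(f_l(X\sk))\|\to0$, which comes from the expectation part of the hypothesis. It shows that $\Psi\sk$ of the level-$k$ element is within $o(1)$ of the fixed Toeplitz polynomial $\sum_r\prod_l(p\sk f_l\sr(\sharp(X\sk))p\sk+u\sk f_l\sr(\sharp(X\sk))u\sk)$, plus the constant term. Strong convergence of $(\sharp(X\sk),T\sk,p\sk,u\sk)$, from Theorem~\ref{thm: main theorem} and Remark~\ref{rmk: Toeplitz prob space strong conv}, then gives convergence of these norms, and isometry of $\Psi\sk$ transfers it to the free products.

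Finally, Lemma~\ref{lem: weak conv of AFP}, which is where consistency on $D\sk$ is used, provides the expectation-preserving homomorphism from the algebraic free product into $\prod_\cU A_1\sk\ast_{D\sk}A_2\sk$. The norm estimate upgrades it to an embedding. Your treatment of the expectation as a vacuum compression depends on the realization you have not constructed.
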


We note that Pisier's proof crucially used the analytic Khintchine type inequality of Ricard-Xu \cite{KhinRX} from noncommutative $L^p$--space theory. Our proof avoids this completely, and is based on a conceptually different \emph{algebraic} approach. As a side remark, note that our work provides also a satisfactory answer to question (3) of Section 5 in Skoufranis' work \cite{skoufranis2015notion}. In particular, our work is able to circumvent the technical challenges that Skoufranis faced in his approach \cite{skoufranis2015notion}, and extend the result significantly.

Another advantage of the general setup of Corollary \ref{col: amalgamated free products} is that it could be used to aid in proving the existence of strongly converging unitary representations, following the novel approach of \cite{gao2026newsourcepurelyfinite}. Particularly, in one of the steps in the proof of Lemma 2.3 in \cite{gao2026newsourcepurelyfinite}, one could alternatively use Corollary \ref{col: amalgamated free products} in place of an isomorphism theorem for Toeplitz-Pimnser algebras \cite[Theorem 2.2]{gao2026newsourcepurelyfinite} (in Remark \ref{rmk: application to groups} the argument is explained for the benefit of the reader). However, recall that the proof of the isomorphism theorem \cite[Theorem 2.2]{gao2026newsourcepurelyfinite} is much easier than the proof of our Corollary \ref{col: amalgamated free products}, rendering the original application of Toeplitz-Pimsner machinery in \cite[Lemma 2.3]{gao2026newsourcepurelyfinite} much more succinct and transparent. While our results in this paper cannot produce finite-dimensional models for strong convergence on their own, they can assist in the above manner owing to the apparatus developed in \cite{gao2026newsourcepurelyfinite}.

Below we document another application of upgrading strong convergence in the setting of HNN extensions (\cite{Ueda2008Remarks, gao2026selflessreducedamalgamatedfree}).

\begin{col}\label{col: HNN extensions}
    Fix an index set $I$. Let $A\sk$ be a $C^\ast$-algebra equipped with a faithful state $\rho\sk$, $B_1\sk, B_{-1}\sk \subset A\sk$ be subalgebras admitting state-preserving conditional expectations $E_1\sk$ and $E_{-1}\sk$, resp., and $\theta\sk: B_1\sk \to B_{-1}\sk$ be a state-preserving $\ast$-isomorphism; and let $(X_i\sk)_{i \in I}$ be a tuple in $A\sk$, for each $k \in \N \cup \{\infty\}$. Assume further that $A\si$ is generated by $(X_i\si)_{i \in I}$. If $(X_i\sk; \rho\sk, E_1\sk, E_{-1}\sk, \theta\sk)_{i \in I} \to (X_i\si; \rho\si, E_1\si, E_{-1}\si, \theta\si)_{i \in I}$ strongly, then $(X_i\sk, w\sk)_{i \in I}$ converges strongly to $(X_i\si, w\si)_{i \in I}$, where $w\sk \in \HNN(A\sk, \theta\sk, B_1\sk, B_{-1}\sk)$ is the unitary implementing $\theta\sk$ in the HNN extension, for each $k \in \N \cup \{\infty\}$.
\end{col}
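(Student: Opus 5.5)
The plan is to realize the HNN extension as a Toeplitz--Pimsner type algebra (or a corner/quotient of one) built from a $C^\ast$-correspondence over an amalgamated free product. Then Theorem \ref{thm: main theorem} and Corollary \ref{col: amalgamated free products} do the work, and a final compression/quotient step recovers $w\sk$.

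\textbf{Step 1: reduce to a crossed-product-like setting.} Following Ueda \cite{Ueda2008Remarks}, the reduced HNN extension $\HNN(A\sk,\theta\sk,B_1\sk,B_{-1}\sk)$ can be described as follows. Take $D\sk = B_{-1}\sk$ and the amalgamated free product
\[
\M\sk = \ast_{\Z}\,\bigl(A\sk\bigr)
\]
of copies $\sigma_n(A\sk)$, $n\in\Z$, amalgamated over the copies of $B_{\pm1}\sk$ identified via $\theta\sk$. The HNN extension is then the reduced crossed product of $\M\sk$ by the shift automorphism $\sigma\sk$, compressed to the copy of $A\sk$, with $w\sk$ corresponding to the implementing unitary. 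I would instead phrase this directly with correspondences. Let $\cH\sk$ be the $C^\ast$-correspondence over the amalgamated free product $\M\sk$ given by $\M\sk$ itself, with right action twisted by $\sigma\sk$, generated by the single vector $\xi\sk = 1$. The Toeplitz--Pimsner algebra $\cT(\cH\sk)$ then contains an isometry $T_\xi\sk$ satisfying $T_\xi^{\ast} X T_\xi = \sigma(X)$. The reduced crossed product, and hence $w\sk$, is obtained from $\cT(\cH\sk)$ by passing to the Cuntz--Pimsner quotient. Since $\cH\sk$ is a full correspondence with a left action by an automorphism, this quotient is given by the ideal generated by $1-T_\xi T_\xi^\ast$.

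\textbf{Step 2: strong convergence of the coefficient data.} Corollary \ref{col: amalgamated free products}, applied to the copies $\sigma_n(X_i\sk)$ with the expectations $E_{\pm1}\sk$, gives strong convergence of the generators of $\M\sk$. The amalgamation is consistent because $\theta\sk$ converges strongly in the assumed sense, i.e.\ jointly with $\rho\sk$ and $E_{\pm1}\sk$. This strong convergence holds jointly with the free product expectation onto $B_{-1}\sk$ and with the state $\rho\sk\circ E\sk$. Hence the inner products $\langle \xi,X\xi\rangle=X$ and the twisted right actions converge in norm, which is exactly the hypothesis of Theorem \ref{thm: main theorem}. Its conclusion is that $(X_i\sk,T_\xi\sk)$ converges strongly in $\cT(\cH\sk)$.

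\textbf{Step 3: main obstacle.} The hard part is descending from $\cT$ to the Cuntz--Pimsner quotient, since strong convergence does not pass to quotients in general. Here I would exploit faithfulness of $\rho\si$. The quotient map should be isometric on the relevant $\ast$-polynomials, in the sense that the norm in $\HNN$ equals the norm of the image in the GNS representation of the free state. That representation is the Fock-type representation on $\ell^2$ built from the shift. So I would verify the following: for any noncommutative polynomial $P$ in $X_i,w,w^\ast$, the norm $\|P(w\sk)\|$ can be computed as a limit of norms of $P$ evaluated at $T_\xi$ compressed by the projections $T_\xi^nT_\xi^{\ast n}$. This is a standard dilation argument, in which the unitary is the strong limit of shifted isometries. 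Such an approximation is uniform in $k$, which upgrades Step 2 to the desired statement.

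Upper semicontinuity of norms follows from the bound $\|P(w\sk)\| \le \|\tilde P(T_\xi\sk)\|$ for a suitable lift $\tilde P$. Lower semicontinuity follows from the uniform approximation combined with the lower bound coming from strong convergence. I expect this uniform-in-$k$ control of the quotient to be the delicate point.
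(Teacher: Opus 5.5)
Your route differs from the paper's: you realize the HNN extension as a crossed product $\mathcal{M}\sk\rtimes_{\sigma\sk}\Z$ of a Bass--Serre ``line product'', apply Theorem \ref{thm: main theorem}, and then pass to the Cuntz--Pimsner quotient. As written it has genuine gaps at both ends.

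\textbf{Steps 1--2.} $\mathcal{M}\sk$ is not an amalgamated free product over a single subalgebra $D\sk=B_{-1}\sk$. Since $w\sk$ conjugates $B_1\sk$ onto $B_{-1}\sk$, each copy $\sigma^n(A\sk)$ is amalgamated with one neighbour along $\sigma^n(B_1\sk)$ and with the other along $\sigma^n(B_{-1}\sk)$, and these are different subalgebras. So Corollary \ref{col: amalgamated free products} does not apply as you state it. Building $\mathcal{M}\sk$ by iterated amalgamation instead would require, at each stage, strong convergence of the algebra built so far jointly with the expectation onto the next edge algebra. That expectation is $E_{\pm1}\sk$ composed with the expectation onto a free factor, and neither Corollary \ref{col: amalgamated free products} nor Lemma \ref{lem: weak conv of AFP} provides it. In addition, the identification $\HNN(A\sk,\theta\sk,B_1\sk,B_{-1}\sk)\cong\mathcal{M}\sk\rtimes_{\sigma\sk}\Z$ is a substantive structural theorem for reduced HNN extensions. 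You assert it but do not prove it, and nothing in the paper supplies it. Note also that it would be the whole crossed product, since $A\sk$ and $w\sk$ generate it; there is no compression to $A\sk$.

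\textbf{Step 3.} You flag this step yourself, and it is left open. The heuristic offered (isometry of the quotient map on ``relevant'' polynomials, the GNS representation of a free state) does not amount to an argument. It can be closed by an exact, $k$-independent formula:
\begin{itemize}
\item Analytic elements $\sum_{m\ge0}b_mT_\xi^m$ with $b_m\in\mathcal{M}$ have the same norm in $\cT(\cH)$ as $\sum_{m\ge0}b_mw^m$ in the crossed product.
\item This holds because the Fock representation is the restriction of the regular representation to an invariant half-space, and $\sigma$ extends to an automorphism of $\cT(\cH)$ fixing $T_\xi$.
\item Hence $\|P(X,w)\|=\|P(X,w)w^d\|=\|\tilde P(X,T_\xi)T_\xi^d\|$ whenever $d$ bounds the number of $T_\xi^\ast$'s in the monomials of the lift $\tilde P$.
\end{itemize}
More simply, you can bypass Toeplitz algebras altogether. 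An equivariant embedding $\mathcal{M}\si\to\prod_{\cU}\mathcal{M}\sk$ induces an embedding of the crossed products by $\Z$ into $\prod_{\cU}(\mathcal{M}\sk\rtimes\Z)$, by faithfulness of the canonical expectations onto $\mathcal{M}\sk$.

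\textbf{Comparison with the paper.} The paper avoids all of this. By Lemma \ref{lem: structure of HNN}, the HNN extension is the $C^\ast$-subalgebra generated by the first copy of $A$ and $w=uv$ inside
\[
C=((A\ast A)\rtimes\Z/2\Z)\ast_{B_1\ast B_{-1}}((B_1\ast B_{-1})\rtimes\Z/2\Z).
\]
Doubling $A$ to $A\ast A$ is exactly what makes $B_1\ast B_{-1}$ a single common amalgam, so Corollary \ref{col: amalgamated free products} applies directly. The $\Z/2\Z$-actions are handled by the $2\times 2$ matrix picture. Because $w$ is a polynomial in the generators of $C$, no quotient ever has to be taken.
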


As a final application we describe here, one can also upgrade strong convergence to Shlyakhtenko's $A$-valued semicircular systems \cite{shlyakhtenko1999valued}, in a precise sense. This is in particular useful because it can be used to remove one of the hypotheses involving ambient strong convergence in Theorem B of the recent interesting work \cite{jekel2025strongconvergenceoperatorvaluedsemicirculars}.

\begin{col}\label{col: operator valued semicirculars}
    Fix index sets $I$ and $J$. Let $(X_i\sk)_{i \in I}$ be a tuple in a $C^\ast$-algebra $A\sk$ and $\eta\sk = (\eta_{j_1j_2}\sk)_{j_1, j_2 \in J}$ be a covariance matrix, for each $k \in \N \cup \{\infty\}$. Assume further that $A\si$ is generated by $(X_i\si)_{i \in I}$. Let $(S_j\sk)_{j \in J}$ be the $A\sk$-valued semicircular family with covariance $\eta\sk$, for each $k \in \N \cup \{\infty\}$. If $(X_i\sk; \eta\sk)_{i \in I}$ converges strongly to $(X_i\si; \eta\si)_{i \in I}$, then $(X_i\sk, S_j\sk)_{i \in I, j \in J}$ converges strongly to $(X_i\si, S_j\si)_{i \in I, j \in J}$.
\end{col}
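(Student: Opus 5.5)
The plan is to deduce Corollary \ref{col: operator valued semicirculars} from Theorem \ref{thm: main theorem}, using Shlyakhtenko's realization of $A$-valued semicirculars inside a Toeplitz--Pimsner algebra \cite{shlyakhtenko1999valued}. For each $k$, let $\cH\sk$ be the $C^\ast$-correspondence over $A\sk$ obtained by separation and completion of the algebraic tensor product $A\sk \otimes \C^{(J)} \otimes A\sk$, the bimodule spanned by $a\,\xi_j\, b$. Its $A\sk$-valued inner product is
\[
\langle a_1 \xi_{j_1} b_1, a_2 \xi_{j_2} b_2\rangle = b_1^\ast\, \eta_{j_1 j_2}\sk(a_1^\ast a_2)\, b_2 ,
\]
which is positive precisely because $\eta\sk$ is a (completely positive) covariance matrix. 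Here $\xi_j\sk$ is the canonical generator attached to the index $j$, and by construction $\cH\sk$ is generated by $(\xi_j\sk)_{j\in J}$. We then take $S_j\sk = T_{\xi_j}\sk + (T_{\xi_j}\sk)^\ast$ in $\cT(\cH\sk)$. Shlyakhtenko's construction identifies the $A\sk$-valued semicircular system with covariance $\eta\sk$ with the $C^\ast$-algebra generated by $A\sk$ and these $S_j\sk$ inside $\cT(\cH\sk)$, acting on the full Fock module. Because strong convergence concerns norms of noncommutative polynomials, passing to the subalgebra generated by $X_i\sk$ and $S_j\sk$ costs nothing: those norms are the same whether computed in the subalgebra or in $\cT(\cH\sk)$.

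Next I have to check the hypothesis of Theorem \ref{thm: main theorem}, namely that $(X_i\sk;\xi_j\sk)$ converges strongly to $(X_i\si;\xi_j\si)$. As I read the paper's definition, strong convergence of a tuple in a correspondence means that for every noncommutative $\ast$-polynomial $P$ in the $X$'s we have convergence of the norms of the inner-product expressions $\langle P_1\xi_{j_1}Q_1,\, P_2\xi_{j_2}Q_2\rangle$, and more generally of matrices of such expressions. These expressions equal $Q_1^\ast\,\eta_{j_1j_2}\sk(P_1^\ast P_2)\,Q_2$. This is exactly the data controlled by the hypothesis that $(X_i\sk;\eta\sk)$ converges strongly to $(X_i\si;\eta\si)$, which I take to mean convergence of the norms of polynomials in the $X_i$ interleaved with nested applications of the $\eta_{j_1j_2}$. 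So the hypothesis transfers by direct unwinding of the definitions, including the matricial versions, since the covariance is completely positive.

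Applying Theorem \ref{thm: main theorem} then gives strong convergence of $(X_i\sk, T_{\xi_j}\sk)$, and hence of every $\ast$-polynomial in $X_i\sk$ and $S_j\sk = T_{\xi_j}\sk+(T_{\xi_j}\sk)^\ast$, which is the claim. The main obstacle I expect is making the two notions of ``strong convergence with extra data'' match exactly. One issue is that the definition for correspondence tuples may require nested inner products $\langle \cdot,\cdot\rangle$ with $A$-coefficients in between, while the $\eta$-hypothesis should allow nested $\eta$'s; I would handle this by induction on the nesting depth. The other issue is the degenerate case where $\eta\sk$ fails to be faithful, so that the separation quotient must be taken before completing; positivity of the form is what makes this quotient well defined.
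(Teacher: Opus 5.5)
Your proposal is correct and is the paper's argument: realize $S_j\sk = T_{\xi_j}\sk + (T_{\xi_j}\sk)^\ast$ inside $\cT(\cH(\eta\sk))$ with $\xi_j\sk = 1 \otimes e_j$, apply Theorem \ref{thm: main theorem}, and note that $\ast$-polynomials in $(X_i\sk, S_j\sk)$ are $\ast$-polynomials in $(X_i\sk, T_{\xi_j}\sk)$. The definitional matching you worry about is in fact trivial: since $\langle \xi_{j_1}\sk, Q(X_i\sk)\xi_{j_2}\sk\rangle = \eta_{j_1j_2}\sk(Q(X_i\sk))$, condition (2) of the correspondence definition is verbatim condition (b) of Examples \ref{examps: examples for strong conv}(3), which is exactly how the paper defines strong convergence of $(X_i\sk;\eta\sk)$, so you need neither matricial versions nor induction on nesting depth (nested versions follow anyway from the ultraproduct characterization).
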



\subsection*{Remarks to the reader} The proof of the main result, Theorem \ref{thm: main theorem}, is quite succinct using the gauge-invariant uniqueness theorem of Fowler-Muhly-Raeburn \cite{FowlerMuhlyRaeburn2003}, once the relevant setup and notations have been constructed precisely. One of the more notationally challenging aspects of these results is to set up the ambient weak and strong convergence hypotheses appropriately in various concrete settings (see Examples \ref{examps: examples for strong conv}). Once this is set up with care in amalgamated free products, Corollary \ref{col: amalgamated free products} is proved by fitting in \cite[Theorem 4.8.2]{brown2008textrm} which concretely links general reduced amalgamated free products to Toeplitz-Pimsner algebras.

As far as HNN extensions are concerned, we exploit the recent insight from \cite{gao2026selflessreducedamalgamatedfree} expressing the HNN extension as a concrete unital subalgebra of an amalgamated free product. Therefore, Corollary \ref{col: HNN extensions} can be proved by using Corollary \ref{col: amalgamated free products} appropriately. The proof of Corollary \ref{col: operator valued semicirculars} on the other hand is quite straightforward from the way the $A$-valued semicirculars are constructed, and is a direct application of Theorem \ref{thm: main theorem} in that setting.

\subsection{Acknowledgments} 
This work was done when the first author visited the second author, thanks to support from the Brin Mathematics Research Center at UMD College Park. We thank R. van Handel for an inspiring conversation that kickstarted this work. The paper is dedicated to Marius Junge and Leonel Robert with great admiration. Many of our recent exploits have greatly benefited from their vision.

\section{Preliminaries and Notations}

Unless otherwise mentioned, amalgamated free products of $C^\ast$-algebras equipped with nondegenerate conditional expectations will be in the reduced sense \cite{voiculescu1985symmetries}. Additionally, all tensor products will be minimal. All ultraproducts mentioned will be Banach space ultraproducts, which are naturally $C^\ast$-algebras (and are in fact the standard $C^\ast$-algebraic ultraproducts with respect to the operator norm) should the input Banach spaces all be $C^\ast$-algebras. Finally, if $\rho$ is a state on a $C^\ast$-algebra $A$, then $\Aut(A, \rho)$ shall denote the group of $\ast$-automorphisms of $A$ that preserve $\rho$.

We start with the definition of strong convergence for $C^\ast$-correspondences:

\begin{defn}
    Fix index sets $I$ and $J$. Let $(X_i\sk)_{i \in I}$ be a tuple in a $C^\ast$-algebra $A\sk$ and $(\xi_j\sk)_{j \in J}$ be a tuple in a $C^\ast$-correspondence $\cH\sk$ over $A\sk$, for each $k \in \N \cup \{\infty\}$. Assume further that $A\si$ is generated by $(X_i\si)_{i \in I}$ and $\cH\si$ is generated by $(\xi_j\si)_{j \in J}$ as a $C^\ast$-correspondence over $A\si$. We say that $(X_i\sk; \xi_j\sk)_{i \in I, j \in J}$ \emph{converges strongly} to $(X_i\si; \xi_j\si)_{i \in I, j \in J}$ if
    \begin{enumerate}
        \item $(X_i\sk)_{i \in I} \to (X_i\si)_{i \in I}$ strongly in the classical sense, i.e., for any $\ast$-polynomial $P$, $\lim_{k \to \infty} \|P(X_i\sk)\| = \|P(X_i\si)\|$; and,
        \item for any $\ast$-polynomials $P, Q$ and $j_1, j_2 \in J$,
        \begin{equation*}
            \lim_{k \to \infty} \|P(X_i\sk) - \langle \xi_{j_1}\sk, Q(X_i\sk)\xi_{j_2}\sk\rangle\| = \|P(X_i\si) - \langle \xi_{j_1}\si, Q(X_i\si)\xi_{j_2}\si\rangle\|.
        \end{equation*}
    \end{enumerate}
\end{defn}

\begin{rmk}\label{rmk: ultraprod ver of strong conv}
    Recall that, if we fix an index set $I$ and let $(X_i\sk)_{i \in I}$ be a tuple in a $C^\ast$-algebra $A\sk$, for each $k \in \N \cup \{\infty\}$. Assume further that $A\si$ is generated by $(X_i\si)_{i \in I}$. Then $(X_i\sk)_{i \in I}$ converges strongly to $(X_i\si)_{i \in I}$ in the classical sense if and only if, for each free ultrafilter $\cU$ on $\N$, there exists an embedding $\pi: A\si \to \prod_\cU A\sk$ s.t. $\pi(X_i\si) = (X_i\sk)_\cU$ for each $i \in I$. There is a corresponding ultraproduct characterization of strong convergence for $C^\ast$-correspondences, as follows: $(X_i\sk; \xi_j\sk)_{i \in I, j \in J} \to (X_i\si; \xi_j\si)_{i \in I, j \in J}$ strongly if and only if, for each free ultrafilter $\cU$ on $\N$, there exist embeddings $\pi: A\si \to \prod_\cU A\sk$ and $\phi: \cH\si \to \prod_\cU \cH\sk$ such that
    \begin{enumerate}
        \item $\pi(X_i\si) = (X_i\sk)_\cU$ for all $i \in I$ and $\phi(\xi_j\si) = (\xi_j\sk)_\cU$ for all $j \in J$;
        \item for $a, b \in A\si$, $\xi \in \cH\si$, if $\pi(a) = (a\sk)_\cU$, $\pi(b) = (b\sk)_\cU$, and $\phi(\xi) = (\xi\sk)_\cU$, then $\phi(a\xi b) = (a\sk\xi\sk b\sk)_\cU$; and,
        \item for $\xi, \eta \in \cH\si$, if $\phi(\xi) = (\xi\sk)_\cU$ and $\phi(\eta) = (\eta\sk)_\cU$, then $\pi(\langle\xi, \eta\rangle) = (\langle\xi\sk, \eta\sk\rangle)_\cU$.
    \end{enumerate}

    We leave it to the reader to check the above equivalence.
\end{rmk}

We will also use various standard notations in $C^\ast$-algebras. Please consult \cite{brown2008textrm}. In particular, recall the following:

\begin{defn}\label{defn: Toeplitz algebra defn}
    Let $\cH$ be a $C^\ast$-correspondence over a $C^\ast$-algebra $A$. Then the \emph{Toeplitz-Pimsner algebra} $\cT(\cH)$ associated to $\cH$ is defined to be the universal $C^\ast$-algebra generated by $A$ and $T_\xi$ (called \emph{creation operators}) for $\xi \in \cH$, subject to the relations that
    \begin{equation*}
        T_{\alpha\xi + \eta} = \alpha T_\xi + T_\eta\text{ for }\alpha \in \C, \xi, \eta \in \cH;
    \end{equation*}
    \begin{equation*}
        T_{a\xi b} = aT_\xi b\text{ for }a, b \in A, \xi \in \cH;
    \end{equation*}
    \begin{equation*}
        T_\xi^\ast T_\eta = \langle \xi, \eta\rangle\text{ for }\xi, \eta \in \cH.
    \end{equation*}

    Note that the above relation implies $\cH \ni \xi \mapsto T_\xi \in \cT(\cH)$ is an isometric $A$-$A$-bimodular linear map. From this it follows that, should $\cH$ be generated by $\{\xi_j\}_{j \in J}$, then $\cT(\cH)$ can alternatively be described as the universal $C^\ast$-algebra generated by $A$ and $T_{\xi_j}$ for $j \in J$, subject to the relations that $T_{\xi_{j_1}}^\ast aT_{\xi_{j_2}} = \langle \xi_{j_1}, a\xi_{j_2}\rangle$.

    From the definition, it is easy to check that there is an action $\beta: \T \curvearrowright \cT(\cH)$ defined by $\beta_z(a) = a$ for all $a \in A$ and $\beta_z(T_\xi) = zT_\xi$ for all $\xi \in \cH$. This is called the \emph{gauge action}.

    We also note that the Toeplitz-Pimsner algebra admits a canonical nondegenerate conditional expectation $E: \cT(\cH) \to A$ which satisfies $E(T_\xi T_\eta^\ast) = 0$ for all $\xi, \eta \in \cH$ \cite[Theorem 4.6.6(2)]{brown2008textrm}. In fact, by applying Cauchy-Schwarz, the reader may check that $E$ is uniquely characterized by the properties that it is positive, restricts to the identity map on $A$, and satisfies $E(T_\xi T_\xi^\ast) = 0$ for all $\xi \in \cH$.
\end{defn}

We recall the gauge-invariant uniqueness theorem for Toeplitz-Pimsner algebras \cite{FowlerMuhlyRaeburn2003} (see also \cite[Theorem 4.6.18]{brown2008textrm}).

\begin{thm}\label{thm: gauge-invariant uniqueness theorem}
    Let $\cH$ be a $C^\ast$-correspondence over a $C^\ast$-algebra $A$. Suppose $\cH$ is generated by $\{\xi_j\}_{j \in J}$. Let $B$ be a $C^\ast$-algebra, $\pi: A \to B$ be an embedding. Suppose $S_j \in B$ for $j \in J$ satisfies $S_{j_1}^\ast \pi(a)S_{j_2} = \pi(\langle \xi_{j_1}, a\xi_{j_2}\rangle)$. Suppose further that $B$ admits a gauge action, i.e., there is an action $\beta: \T \curvearrowright B$ s.t. $\beta_z(\pi(a)) = \pi(a)$ for all $a \in A$ and $\beta_z(S_j) = zS_j$ for all $j \in J$. Suppose also that
    \begin{equation}\label{eqn: intersection condition}
        \pi(A) \cap \overline{\Span}\{aS_{j_1}bS_{j_2}^\ast c: a, b, c \in A, j_1, j_2 \in J\} = \{0\}.
    \end{equation}

    Then $\pi$ extends to a (unique) embedding $\tilde{\pi}: \cT(\cH) \to B$ s.t. $\pi(T_{\xi_j}) = S_j$ for all $j \in J$.
\end{thm}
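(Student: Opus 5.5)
The plan is the standard route: build a homomorphism from the universal property, make it gauge-equivariant, reduce injectivity to the gauge-fixed core, and then handle the core by an induction on Fock-space degree.

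\emph{Step 1: the homomorphism.} First I will produce a Toeplitz representation of $\cH$ in $B$. On finite sums $\eta = \sum a_k \xi_{j_k} b_k$ I define $\psi(\eta) = \sum \pi(a_k) S_{j_k} \pi(b_k)$. The hypothesis $S_{j_1}^\ast \pi(a) S_{j_2} = \pi(\langle \xi_{j_1}, a\xi_{j_2}\rangle)$, expanded bilinearly, gives $\psi(\eta)^\ast\psi(\eta') = \pi(\langle \eta, \eta'\rangle)$. Hence $\|\psi(\eta)\|^2 = \|\pi(\langle\eta,\eta\rangle)\| = \|\eta\|^2$, because $\pi$ is injective and so isometric. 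This shows that $\psi$ is well defined and isometric on the algebraic span. Since $\cH$ is the closed span of such elements, $\psi$ extends to all of $\cH$. By construction it is $A$-bimodular and satisfies the relations of Definition \ref{defn: Toeplitz algebra defn}. The universal property then gives a $\ast$-homomorphism $\tilde{\pi}: \cT(\cH) \to B$ extending $\pi$ with $\tilde\pi(T_{\xi_j}) = S_j$. It is gauge-equivariant, since $\beta^B_z\circ\tilde\pi$ and $\tilde\pi\circ\beta_z$ agree on generators. Uniqueness is clear because $A$ and the $T_{\xi_j}$ generate $\cT(\cH)$.

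\emph{Step 2: reduction to the core.} Let $\Phi(x) = \int_\T \beta_z(x)\,dz$ on $\cT(\cH)$, and let $\Phi_B$ be the analogous average on $B$. Then $\Phi$ is a faithful conditional expectation onto the fixed-point algebra $\cT(\cH)^\beta$, and $\tilde\pi\circ\Phi = \Phi_B\circ\tilde\pi$. If $\tilde\pi(x)=0$, then $\tilde\pi(\Phi(x^\ast x)) = \Phi_B(\tilde\pi(x^\ast x)) = 0$. So it suffices to show that $\tilde\pi$ is injective on $\cT(\cH)^\beta$. This algebra is the closure of the increasing union of the $C^\ast$-subalgebras
\[
F_n = \overline{\Span}\bigl(A \cup \{T_{\zeta}T_{\zeta'}^\ast : \zeta,\zeta' \in \cH^{\otimes m},\ 1\le m\le n\}\bigr),
\]
where $T_\zeta$ denotes the product of creation operators for an elementary tensor. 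An injective homomorphism of $C^\ast$-algebras is isometric, so it is enough to prove that $\tilde\pi$ is injective on each $F_n$.

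\emph{Step 3: induction on $n$.} Write $x\in F_n$ as $x = a + k$, where $a\in A$ and $k$ lies in the closed span $K_{\le n}$ of the terms of degree $m \ge 1$. This decomposition is unique: applying the canonical expectation $E$ recovers $a$, because $E$ kills $T_\xi T_\eta^\ast$.

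Suppose $\tilde\pi(a+k) = 0$. The first task is to show $\pi(a) = 0$ using condition (\ref{eqn: intersection condition}). Then $\pi(a) = -\tilde\pi(k)$, and I need $\tilde\pi(k)$ to lie in the closed span of the elements $aS_{j_1}bS_{j_2}^\ast c$. This is immediate for degree-one terms. For higher-degree terms, $T_{\xi_1\otimes\xi_2}T_{\eta_1\otimes\eta_2}^\ast = T_{\xi_1}(T_{\xi_2}T_{\eta_2}^\ast)T_{\eta_1}^\ast$, and the middle factor is not in $A$. So one must either read (\ref{eqn: intersection condition}) as referring to the ideal or $C^\ast$-algebra these elements generate, or reduce to degree one. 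For the reduction, I would first try approximate units in the middle slot together with induction on degree, which is the route taken in \cite{FowlerMuhlyRaeburn2003}.

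Granting $a=0$, we have $\tilde\pi(k)=0$ with $k\in K_{\le n}$. For generators $\xi_{j_1},\xi_{j_2}$ and $b,c\in A$, the element $T_{\xi_{j_1}}^\ast b\,k\,c\,T_{\xi_{j_2}}$ lies in $F_{n-1}$, since multiplying on both sides lowers the degree by one. Its image $S_{j_1}^\ast\pi(b)\tilde\pi(k)\pi(c)S_{j_2}$ is zero, so by the inductive hypothesis (with $F_0=A$ and $\pi$ injective) all these compressions vanish. In the Fock representation $k$ annihilates the vacuum summand $A$ and is determined by its compressions between the tensors $\xi_{j_1}\otimes\cdots$, which span a dense subspace. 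Hence $k=0$.

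\emph{Main obstacle.} The delicate point is the step $\pi(a)=0$, that is, verifying that $\tilde\pi(K_{\le n})$ meets $\pi(A)$ trivially under exactly the stated form of (\ref{eqn: intersection condition}). This is where I expect to spend the effort.
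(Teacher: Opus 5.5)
\textbf{There is a genuine gap at the step you flag.} The paper does not prove this statement; it quotes it from \cite{FowlerMuhlyRaeburn2003} and \cite[Theorem 4.6.18]{brown2008textrm}. Your outline is the standard route of those references: universal property, averaging over the gauge action, then induction over the cores $F_n$. Steps 1 and 2 are sound. But the step $\pi(a)=0$ is the real content of the theorem, and you leave it open.

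Both remedies you suggest fail. Since $\cH$ is densely spanned by the $a\xi_jb$, the set $\overline{\Span}\{aS_{j_1}bS_{j_2}^\ast c\}$ is exactly $\tilde\pi$ of the degree-one part $\overline{\Span}\{T_\zeta T_{\zeta'}^\ast:\zeta,\zeta'\in\cH\}$. The higher-degree terms of $K_{\le n}$ are genuinely not in this degree-one part, even inside $\cT(\cH)$. In the Fock representation, a degree-one element acts on every $\cH^{\otimes m}$ as $\kappa\otimes 1$ for a single $\kappa\in\mathcal{K}(\cH)$, so it vanishes on $\cH$ only if it is zero. By contrast, $T_{\xi_1\otimes\xi_2}T_{\eta_1\otimes\eta_2}^\ast$ vanishes on $\cH$ without being zero. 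So no approximate unit in the middle slot can put $\tilde\pi(k)$ into that span.

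Reinterpreting condition (\ref{eqn: intersection condition}) is not available either. That closed span is already a $C^\ast$-algebra, so the $C^\ast$-algebra it generates is the same set. The ideal it generates in $B$ contains nonzero elements of $\pi(A)$ such as $S_j^\ast\pi(a)S_j\pi(b)S_j^\ast S_j$. That version of the hypothesis would therefore fail in essentially every example.

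\textbf{The missing idea.} You only need $\pi(a)$, not $\tilde\pi(k)$, to lie in degree one, and this follows by compressing on both sides with an approximate unit of the degree-one part. Let $\epsilon_\lambda\in\cT(\cH)$ be the image of an approximate unit $u_\lambda$ of $\mathcal{K}(\cH)$ under the $\ast$-homomorphism $\theta_{\zeta,\zeta'}\mapsto T_\zeta T_{\zeta'}^\ast$.

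\begin{enumerate}
\item For $\xi\in\cH$ we have $\epsilon_\lambda T_\xi=T_{u_\lambda\xi}\to T_\xi$. Every element of $K_{\le n}$ is a limit of sums of products $T_\xi zT_\eta^\ast$ with $\xi,\eta\in\cH$, so $\epsilon_\lambda y\epsilon_\lambda\to y$ for all $y\in K_{\le n}$.
\item Because $aT_\zeta T_{\zeta'}^\ast=T_{a\zeta}T_{\zeta'}^\ast$, the element $\epsilon_\lambda a\epsilon_\lambda$ lies in the degree-one part.
\item Hence, if $\pi(a)=-\tilde\pi(k)$, then $\tilde\pi(\epsilon_\lambda a\epsilon_\lambda)=-\tilde\pi(\epsilon_\lambda k\epsilon_\lambda)\to-\tilde\pi(k)=\pi(a)$.
\item So $\pi(a)\in\overline{\Span}\{aS_{j_1}bS_{j_2}^\ast c\}$, and (\ref{eqn: intersection condition}) gives $\pi(a)=0$.
\end{enumerate}

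\textbf{A second, smaller gap.} Your final step, as written, silently uses faithfulness of the Fock representation on the universal algebra $\cT(\cH)$. That is a separate, nontrivial fact you would have to import. The same device avoids it. Once $T_\zeta^\ast kT_\eta=0$ for all $\zeta,\eta\in\cH$, each $\epsilon_\lambda k\epsilon_\lambda$ is a limit of sums of terms $T_\zeta(T_{\zeta'}^\ast kT_\eta)T_{\eta'}^\ast$, all of which vanish. Therefore $k=\lim_\lambda\epsilon_\lambda k\epsilon_\lambda=0$.
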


\begin{examps}\label{examps: examples for strong conv}
    \;
    \begin{enumerate}
        \item Consider $\cH = A$ as a $C^\ast$-correspondence over $A$, which is generated by $1$. By the gauge-invariant uniqueness theorem, $\cT(\cH) \cong A \otimes \cT$, where $\cT$ is the classical Toeplitz algebra (i.e., the universal $C^\ast$-algebra of an isometry), where $T_1 \in \cT(\cH)$ corresponds to the generating isometry of $\cT$.

        Suppose for each $k \in \N \cup \{\infty\}$ we have a $C^\ast$-algebra $A\sk$ and $\cH\sk = A\sk$. We leave it to the reader to check that $(X_i\sk; 1\sk)_{i \in I} \to (X_i\si, 1\si)_{i \in I}$ strongly if and only if $(X_i\sk)_{i \in I} \to (X_i\si)_{i \in I}$ strongly in the classical sense.
        \item For a more general example, consider a subalgebra $B \subset A$ admitting a nondegenerate conditional expectation $E: A \to B$. Then we can consider the $C^\ast$-correspondence $\cH = A \otimes_B A$, i.e., the completion of $A \odot A$ under the $A$-valued inner product
        \begin{equation*}
            \hspace{1.5 cm}\langle a \otimes b, c \otimes d\rangle = b^\ast E(a^\ast c)d.
        \end{equation*}

        $\cH$ is generated by the vector $1 \otimes 1$. By the gauge-invariant uniqueness theorem, $\cT(\cH)$ (we shall also denote it by $\cT(E)$ in this case) is isomorphic to $A \ast_B (B \otimes \cT)$, where $\cT$ is understood to be equipped with its vacuum state and where $T_{1 \otimes 1}$ corresponds to the generating isometry of $\cT$.

        Suppose for each $k \in \N \cup \{\infty\}$ we have an inclusion of $C^\ast$-algebras $B\sk \subset A\sk$ admitting a nondegenerate expectation $E\sk$ and $\cH\sk = A\sk \otimes_{B\sk} A\sk$. Again, we leave it to the reader to check that $(X_i\sk; (1 \otimes 1)\sk)_{i \in I} \to (X_i\si, (1 \otimes 1)\si)_{i \in I}$ strongly if and only if
        \begin{enumerate}
            \item $(X_i\sk)_{i \in I} \to (X_i\si)_{i \in I}$ strongly in the classical sense; and,
            \item for any $\ast$-polynomials $P, Q$,
            \begin{equation*}
                \hspace{1.8 cm}\lim_{k \to \infty} \|P(X_i\sk) - E\sk(Q(X_i\sk))\| = \|P(X_i\si) - E\si(Q(X_i\si))\|.
            \end{equation*}
        \end{enumerate}

        We shall take the above as the definition of $(X_i\sk; E\sk)_{i \in I}$ converging to $(X_i\si; E\si)_{i \in I}$ strongly (as elements of the $C^\ast$-probability spaces $(A\sk, E\sk)$ and $(A\si, E\si)$). We also leave it to the reader to check that the above definition is equivalent to the following: For each free ultarfilter $\cU$ on $\N$, there exists an embedding $\pi: A\si \to \prod_\cU A\sk$ such that
        \begin{enumerate}
            \item $\pi(X_i\si) = (X_i\sk)_\cU$ for all $i \in I$; and,
            \item for $a \in A\si$, if $\pi(a) = (a\sk)_\cU$, then $\pi(E\si(a)) = (E\sk(a\sk))_\cU$.
        \end{enumerate}
        \item For an even more general example, recall from \cite{shlyakhtenko1999valued} that, for an index set $J$, we say a matrix of maps $\eta = (\eta_{j_1j_2}: A \to A)_{j_1, j_2 \in J}$ is a \emph{covariance matrix} if for each finite $F \subset J$, $\eta|_F = (\eta_{j_1j_2})_{j_1, j_2 \in F}$ defines a completely positive map $A \to \M_{|F|}(A)$. We can then consider the $C^\ast$-correspondence $\cH(\eta)$ defined as the completion of $A \odot c_{00}(J; A)$ under the $A$-valued inner product
        \begin{equation*}
            \hspace{1.4 cm}\langle a \otimes b, c \otimes d\rangle = \langle b, \eta(a^\ast c)d\rangle_{c_{00}(J; A)} = \sum_{j_1, j_2 \in J} b_{j_1}^\ast\eta_{j_1j_2}(a^\ast c)d_{j_2}.
        \end{equation*}

        Note that $A \otimes_B A$ in the previous example is a special case of the construction here. Namely, it corresponds to the case where $J = \{\ast\}$ and $\eta = E$.

        For the general case, $\cH(\eta)$ is generated by $\{1 \otimes e_j\}_{j \in J}$. From this, it follows that $\cT(\cH(\eta))$ (we shall also denote it by $\cT(\eta)$ in this case) is the universal $C^\ast$-algebra generated by $A$ and $\{T_j\}_{j \in J}$ subject to the relations $T_{j_1}^\ast aT_{j_2} = \eta_{j_1j_2}(a)$. The $C^\ast$-algebra of \emph{the $A$-valued semicircular family with covariance $\eta$} is $C^\ast(A, S_j = T_j + T_j^\ast) \subset \cT(\eta)$, where $(S_j)_{j \in J}$ is the semicircular family.

        Fix index set $J$. Suppose for each $k \in \N \cup \{\infty\}$ we have a $C^\ast$-algebra $A\sk$ and a covariance matrix $(\eta_{j_1j_2}\sk)_{j_1, j_2 \in J}$. We once again leave it to the reader to check that $(X_i\sk; (1 \otimes e_j)\sk)_{i \in I, j \in J} \to (X_i\si; (1 \otimes e_j)\si)_{i \in I, j \in J}$ strongly if and only if
        \begin{enumerate}
            \item $(X_i\sk)_{i \in I} \to (X_i\si)_{i \in I}$ strongly in the classical sense; and,
            \item for any $\ast$-polynomials $P, Q$ and $j_1, j_2 \in J$,
            \begin{equation*}
                \hspace{1.8 cm}\lim_{k \to \infty} \|P(X_i\sk) - \eta_{j_1j_2}\sk(Q(X_i\sk))\| = \|P(X_i\si) - \eta_{j_1j_2}\si(Q(X_i\si))\|.
            \end{equation*}
        \end{enumerate}

        We shall take the above as the definition of $(X_i\sk; \eta\sk)_{i \in I} \to (X_i\si; \eta\si)_{i \in I}$ strongly. Moreover, the above definition is equivalent to the following: For each free ultarfilter $\cU$ on $\N$, there exists an embedding $\pi: A\si \to \prod_\cU A\sk$ such that
        \begin{enumerate}
            \item $\pi(X_i\si) = (X_i\sk)_\cU$ for all $i \in I$; and,
            \item for $a \in A\si$, $j_1, j_2 \in J$, if $\pi(a) = (a\sk)_\cU$, then $\pi(\eta_{j_1j_2}\si(a)) = (\eta_{j_1j_2}\sk(a\sk))_\cU$.
        \end{enumerate}
    \end{enumerate}
\end{examps}

\section{Proof of the Main Results}

\begin{proof}[Proof of Theorem \ref{thm: main theorem}]
    Fix a free ultrafilter $\cU$ on $\N$. By the strong convergence assumption, $\pi: A\si \to \prod_\cU A\sk$ defined by $\pi(X_j\si) = (X_j\sk)_\cU$ for all $j \in J$ is an embedding. Furthermore, by Remark \ref{rmk: ultraprod ver of strong conv}, we see that, for $a \in A\si$ and $j_1, j_2 \in J$, if $\pi(a) = (a\sk)_\cU$, then
    \begin{equation*}
        \left(T_{\xi_{j_1}\sk}^\ast a\sk T_{\xi_{j_2}\sk}\right)_\cU = \pi(\langle \xi_{j_1}\si, a\xi_{j_2}\si\rangle).
    \end{equation*}

    Thus, we have a well-defined $\ast$-homomorphism $\tilde{\pi}: \cT(\cH\si) \to \prod_\cU \cT(\cH\sk)$ extending $\pi$ and satisfying $\tilde{\pi}(T_{\xi_j\si}) = (T_{\xi_j\sk})_\cU$. The range admits a gauge action by applying the gauge action on each $\cT(\cH\sk)$. Equation (\ref{eqn: intersection condition}) in Theorem \ref{thm: gauge-invariant uniqueness theorem} can be verified by applying the canonical condition expectation $E\sk: \cT(\cH\sk) \to A\sk$ at each $k$. The result now follows from Theorem \ref{thm: gauge-invariant uniqueness theorem} and Remark \ref{rmk: ultraprod ver of strong conv}.
\end{proof}

\begin{rmk}\label{rmk: Toeplitz prob space strong conv}
    We also have the stronger result that $(X_i\sk, T_{\xi_j}\sk; E\sk)_{i \in I, j \in J}$ converges to $(X_i\si, T_{\xi_j}\si; E\si)_{i \in I, j \in J}$ strongly, where $E\sk: \cT(\cH\sk) \to A\sk$ is the canonical conditional expectation for each $k \in \N \cup \{\infty\}$. This follows from the characterization of the canonical conditional expectations given in the last paragraph of Definition \ref{defn: Toeplitz algebra defn}. We leave the details to the reader to check.
\end{rmk}

\begin{proof}[Proof of Corollary \ref{col: operator valued semicirculars}]
    This immediately follows from Theorem \ref{thm: main theorem} and part (3) of Examples \ref{examps: examples for strong conv}.
\end{proof}

For Corollary \ref{col: amalgamated free products}, we need to clarify the definitions first:

\begin{defn}
    Fix an index set $J$ and an index set $I_j$ for each $j \in J$. Let $D\sk \subset A_j\sk$ be an inclusion of $C^\ast$-algebras with nondegenerate condition expectation $E_j\sk$; and let $(X_{i, j}\sk)_{i \in I_j}$ be a tuple in $A_j\sk$, for each $j \in J$ and $k \in \N \cup \{\infty\}$. Assume further that $A_j\si$ is generated by $(X_{i, j}\si)_{i \in I_j}$ for every $j \in J$. We say $(X_{i, j}\sk; E_j\sk)_{i \in I_j} \to (X_{i, j}\si; E_j\si)_{i \in I_j}$ \emph{strongly for all $j \in J$, consistently on $D\sk$} if,
    \begin{enumerate}
        \item for each $j \in J$, $(X_{i, j}\sk; E_j\sk)_{i \in I_j} \to (X_{i, j}\si; E_j\si)_{i \in I_j}$ strongly in the sense defined in part (2) of Examples \ref{examps: examples for strong conv}; and,
        \item for any $\ast$-polynomials $P, Q$ and $j_1, j_2 \in J$,
        \begin{equation*}
            \lim_{k \to \infty} \|E_{j_1}\sk(P(X_{i_{j_1}, j_1}\sk)) - E_{j_2}\sk(Q(X_{i_{j_2}, j_2}\sk))\| = \|E_{j_1}\si(P(X_{i_{j_1}, j_1}\si)) - E_{j_2}\si(Q(X_{i_{j_2}, j_2}\si))\|.
        \end{equation*}
    \end{enumerate}
\end{defn}

\begin{rmk}\label{rmk: ultraprod char of consistent strong conv}
    We leave it to the reader to check that the definition above is equivalent to the following: For each free ultrafilter $\cU$ on $\N$, there exists an embedding $\pi_j: A_j\si \to \prod_\cU A_j\sk$ for each $j \in J$ such that
    \begin{enumerate}
        \item $\pi_j(X_{i, j}\si) = (X_{i, j}\sk)_\cU$ for all $i \in I_j, j \in J$;
        \item for $a \in A_j\si$, $j \in J$, if $\pi_j(a) = (a\sk)_\cU$, then $\pi_j(E_j\si(a)) = (E_j\sk(a\sk))_\cU$; and,
        \item for $d \in D\si$, $j_1, j_2 \in J$, we have $\pi_{j_1}(d) = \pi_{j_2}(d)$. (Note that condition 2 already implies $\pi_j(d) \in \prod_\cU D\sk$, so the equality here makes sense.)
    \end{enumerate}
\end{rmk}

The following lemma is the analogue of classical weak convergence in our context, and can be proved by a straightforward combinatorial argument similar to the proof of classical weak convergence. We leave the details to the reader to check.

\begin{lem}\label{lem: weak conv of AFP}
    If $(X_{i, j}\sk; E_j\sk)_{i \in I_j} \to (X_{i, j}\si; E_j\si)_{i \in I_j}$ strongly for all $j \in J$, consistently on $D\sk$, then for every free ultrafilter $\cU$ on $\N$, there is a well-defined $\ast$-homomorphism
    \begin{equation*}
        \phi: \ast^{\mathrm{alg}, j \in J}_{D\si} A_j\si \to \prod_\cU \ast^{j \in J}_{D\sk} A_j\sk
    \end{equation*}
    where $\ast^{\mathrm{alg}}$ denotes the algebraic amalgamated free product and $\phi(X_{i, j}\si) = (X_{i, j}\sk)_\cU$ for every $i \in I_j, j \in J$. Furthermore, $\phi$ preserves expectations, i.e., for every $t \in \ast^{\mathrm{alg}, j \in J}_{D\si} A_j\si$, if $\phi(t) = (t\sk)_\cU$, then $\phi(E_{D\si}(t)) = (E_{D\sk}(t\sk))_\cU$.
\end{lem}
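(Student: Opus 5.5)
We fix a free ultrafilter $\cU$ and first use Remark \ref{rmk: ultraprod char of consistent strong conv} to obtain embeddings $\pi_j: A_j\si \to \prod_\cU A_j\sk$ that intertwine the expectations and agree on $D\si$. Composing with the canonical inclusions $\iota_j\sk: A_j\sk \hookrightarrow \ast_{D\sk} A_j\sk$ coordinatewise gives $\ast$-homomorphisms $\iota_j\circ\pi_j: A_j\si \to \prod_\cU \ast_{D\sk} A_j\sk$. By condition (3) of that remark, together with the fact that the $\iota_j\sk$ all agree on $D\sk$, these maps agree on $D\si$. The algebraic amalgamated free product is the universal (non-normed) $\ast$-algebra for families of $\ast$-homomorphisms agreeing on $D\si$. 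Hence we obtain a $\ast$-homomorphism $\phi$ on $\ast^{\mathrm{alg}}_{D\si} A_j\si$ with $\phi(X_{i,j}\si) = (X_{i,j}\sk)_\cU$. No norm issue arises at this stage, since the domain is purely algebraic.

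For the expectation statement, we use the standard linear decomposition of the algebraic amalgamated free product. It is spanned by $D\si$ together with alternating words $a_1a_2\cdots a_n$, where $a_m \in \ker E_{j_m}\si \subset A_{j_m}\si$ and $j_1 \ne j_2 \ne \cdots \ne j_n$, $n \ge 1$. The defining property of the reduced amalgamated free product expectation is that $E_{D}$ vanishes on such words and is the identity on $D$. By linearity, and because representing sequences are determined only up to $\cU$-null corrections (which $E_{D\sk}$, being contractive, respects), it suffices to check words and elements of $D\si$ separately.

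For $d \in D\si$, condition (2) gives $\pi_j(d) = \pi_j(E_j\si(d)) = (E_j\sk(d\sk))_\cU$. We may therefore choose representatives $d\sk \in D\sk$, and then $E_{D\sk}(d\sk) = d\sk$. For a word, we choose representatives $\pi_{j_m}(a_m) = (a_m\sk)_\cU$. We then replace $a_m\sk$ by $a_m\sk - E_{j_m}\sk(a_m\sk)$. By condition (2), this changes the sequence only by a $\cU$-null amount, since $(E_{j_m}\sk(a_m\sk))_\cU = \pi_{j_m}(E_{j_m}\si(a_m)) = 0$. Each new representative lies in $\ker E_{j_m}\sk$. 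Consequently $t\sk = a_1\sk\cdots a_n\sk$ is an alternating centered word in $\ast_{D\sk}A_j\sk$, so $E_{D\sk}(t\sk) = 0$, which matches $E_{D\si}(t) = 0$.

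The main obstacle is the bookkeeping showing that the conclusion is independent of the chosen representatives. The products of $\cU$-null perturbations remain $\cU$-null because each factor sequence is bounded, as the elements are taken in ultraproducts. Since $E_{D\sk}$ is contractive, $\cU$-equivalent inputs give $\cU$-equivalent outputs.
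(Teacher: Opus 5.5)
Your proposal is correct and follows the argument the paper intends: the paper gives no details, saying only that the lemma follows by ``a straightforward combinatorial argument similar to the proof of classical weak convergence.'' You supply those details soundly. You obtain $\phi$ from the universal property of the algebraic amalgamated free product, using consistency on $D\si$, and you verify expectation-preservation on $D\si$ and on alternating centered words by recentering representatives via condition (2) of Remark \ref{rmk: ultraprod char of consistent strong conv}.
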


We crucially need the following theorem from \cite{brown2008textrm}, which links general amalgamated free products to Toeplitz-Pimsner algebras:

\begin{thm}{\cite[Theorem 4.8.2]{brown2008textrm}}\label{thm: Toeplitz iso with AFP}
    Let $D \subset A_1$ and $D \subset A_2$ be inclusions of $C^\ast$-algebras admitting nondegenerate conditional expectations $E_1$ and $E_2$, resp. Consider the inclusion $D \oplus D \subset A_1 \oplus A_2$ with the expectation $E_1 \oplus E_2: A_1 \oplus A_2 \to D \oplus D$. Let $T = T_{1 \otimes 1} \in \cT(E_1 \oplus E_2)$, $p = 1 - T^2(T^\ast)^2$, $u = p(T + T^\ast)p$. Consider the map
    \begin{equation*}
        \Psi: A_1 \ast_D A_2 \to p\cT(E_1 \oplus E_2)p
    \end{equation*}
    defined by $\Psi(d) = \psi_1(d)$ where $d \in D$ and
    \begin{equation*}
        \Psi(a_1 \cdots a_n) = \psi_{j_1}(a_1) \cdots \psi_{j_n}(a_n)
    \end{equation*}
    where $a_l \in A_{j_l} \ominus D$ and $j_1 \neq j_2 \neq \cdots \neq j_n$; and where
    \begin{equation*}
        \psi_j: A_j \to p\cT(E_1 \oplus E_2)p
    \end{equation*}
    is defined by $\psi_i(a) = pap + uau$. Then $\Psi$ is a well-defined unital complete order isomorphism.
\end{thm}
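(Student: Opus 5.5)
Since the statement is quoted from \cite[Theorem 4.8.2]{brown2008textrm}, I would reconstruct its proof in the concrete Fock picture of $\cT(E_1 \oplus E_2)$ rather than from the universal property. Write $A = A_1 \oplus A_2$, $B = D \oplus D$, $E = E_1 \oplus E_2$ and $\cH = A \otimes_B A$. By the gauge-invariant uniqueness theorem (Theorem \ref{thm: gauge-invariant uniqueness theorem}), $\cT(E)$ is faithfully represented on the full Fock module
\[
\mathcal F = A \oplus \cH \oplus (\cH \otimes_A \cH) \oplus \cdots .
\]
In this representation $A$ acts on the left and $T$ is the creation operator by $1 \otimes 1$. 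In the Fock picture $T^2(T^\ast)^2$ is the projection onto the levels of tensor degree $\ge 2$, so $p = 1 - T^2(T^\ast)^2$ is the projection onto levels $0$ and $1$. Thus $p\cT(E)p$ acts on $\mathcal F_{\le 1} = A \oplus (A \otimes_B A)$, and $u = p(T+T^\ast)p$ is the self-adjoint partial isometry exchanging $a$ with $1 \otimes a$.

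\emph{Step 1 (each $\psi_j$ is a $\ast$-homomorphism, unital on the relevant corner).} I would compute $pap + uau$ explicitly on $\mathcal F_{\le 1}$. The idea is that $A_1$ and $A_2$ live in orthogonal summands of $A$. Conjugating by $u$ moves the action of $A_j$ to the "other slot," so that $\psi_1(a_1)$ and $\psi_2(a_2)$ act as left multiplication on two different legs of the picture. Concretely, I expect $\psi_1(1_{A_1}) + \psi_2(1_{A_2})$ to be related to $p$, and each $\psi_j$ to be a $\ast$-homomorphism into $p\cT(E)p$. Moreover $\psi_1|_D$ and $\psi_2|_D$ should agree, after identifying the two copies of $D$ through $u$. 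This is what makes $\Psi(d) = \psi_1(d)$ consistent.

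\emph{Step 2 (identify the free product module inside the Fock module).} Inside $\mathcal F_{\le 1}$, or rather inside $\mathcal F$ reached by words, I would find a $D$-submodule $\mathcal K$ containing a distinguished vector $\Omega$. The goal is that the map sending $a_1 \cdots a_n \Omega$ to $\psi_{j_1}(a_1) \cdots \psi_{j_n}(a_n)\Omega$, for a reduced word with $a_l \in A_{j_l} \ominus D$, is an isometry from the free product module $L^2(A_1 \ast_D A_2, E_D)$ onto $\mathcal K$. The verification is an induction on word length. One checks that $\psi_j(a)$ with $a \in A_j \ominus D$ sends the "reduced words starting in $A_{j'}$, $j' \neq j$" part to words one longer. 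On a word starting in $A_j$ it splits according to $a a_1 = E_j(a a_1) + (a a_1 - E_j(a a_1))$. This is exactly the recursion defining the free product representation.

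\emph{Step 3 (order isomorphism).} From Step 2, $\Psi(x) = V^\ast \rho(x) V$ type identities hold. Alternatively, the free product representation $\lambda$ of $A_1 \ast_D A_2$ is the compression of the algebra generated by the $\psi_j(A_j)$ to $\mathcal K$. Hence $\Psi$ is well defined and unital completely positive, being an algebra map on each reduced word followed by a compression. For the inverse I would use that $A_1 \ast_D A_2$ acts faithfully on its free product module, since $E_D$ is nondegenerate. The compression map $p\cT(E)p \supset \Psi(A_1 \ast_D A_2) \to \mathbb B(\mathcal K)$ recovers $\lambda(x)$ and is ucp. This gives a ucp inverse on the range, hence a complete order isomorphism onto its image.

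\textbf{Main obstacle.} The main obstacle is Step 1 together with the bookkeeping in Step 2. $\Psi$ is defined only on reduced words and is not claimed to be multiplicative. So one must check carefully which products $\psi_{j_1}(a_1)\psi_{j_2}(a_2)$ genuinely correspond to the free product product, and which produce extra Fock-level terms killed by $p$. I would first verify the length-two case $a_1 a_2$ by hand in the Fock picture, and then formulate the inductive invariant on $\mathcal K$.
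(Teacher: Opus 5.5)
The paper gives no proof of this statement; it quotes Brown--Ozawa. So your proposal has to stand on its own, and its basic picture is wrong.

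\textbf{The Fock picture of $p$ and $u$.} $T^2(T^\ast)^2$ is \emph{not} the projection onto tensor degree $\ge 2$. The module $\cH=A\otimes_BA$ is generated by $1\otimes 1$ only as a bimodule, not as a right module. So $TT^\ast$ projects onto $\overline{(1\otimes 1)\otimes_A\mathcal F}$, whose degree-one part is just $\overline{1\otimes A}$. Consequently:
\begin{itemize}
\item $p\mathcal F$ meets every level, not just levels $0$ and $1$.
\item $u$ does not preserve $\mathcal F_{\le 1}$: $u(x\otimes y)$ has the degree-two component $(1\otimes 1)\otimes_A((x-E(x))\otimes y)$.
\item Under $\cT(E)\cong A\ast_B(B\otimes\cT)$ one really has $p=1\otimes(e_{00}+e_{11})$ and $u=1\otimes(e_{01}+e_{10})$.
\item $\mathcal F_{\le 1}$ only has room for words of length at most two, so it cannot carry $L^2(A_1\ast_DA_2)$ in any case.
\end{itemize}

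\textbf{The untwisted module does not work.} With the module of Examples (2), $q_1=1_{A_1}\oplus 0$ satisfies $q_1\otimes 1=1\otimes q_1$. Hence $q_1$ is central in $\cT(E_1\oplus E_2)$ and commutes with $u$. Read literally, this gives $\psi_1(1)=2q_1p$ and $\psi_1(A_1)\psi_2(A_2)=0$. The hopes in your Step 1 ($u$ moving $A_j$ to ``the other slot'', and $\psi_1|_D=\psi_2|_D$) need a creation operator that interchanges the two summands. An example is $\cH=(A_1\otimes_DA_2)\oplus(A_2\otimes_DA_1)$, i.e.\ $A\otimes A$ balanced through the flip of $D\oplus D$; presumably this is what the cited construction intends. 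In that module the Fock levels are alternating tensors. The copy of the free product module is then spread over \emph{all} levels: a reduced word goes to $a_1\otimes\cdots\otimes a_n$, padded with units at the ends.

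\textbf{The $\psi_j$ are not homomorphisms.} This remains true even in the corrected construction. There $q_1u=uq_2$, so
$\psi_1(a)\psi_1(b)-\psi_1(ab)=-pa(1-p)bp-ua(1-p)bu$.
Take $\zeta=c\otimes 1\otimes 1$ with $E_1(c)=0$. Then the difference applied to $\zeta$ is $-(a-E_1(a))E_1(bc)\otimes 1\otimes 1$, which is nonzero in general (for instance $a=c$, $b=c^\ast$, $c\neq 0$). Step 1 also contradicts your own ``main obstacle''. If the $\psi_j$ were homomorphisms agreeing on $D$, then $\Psi$ would be the $\ast$-homomorphism $\psi_1\ast\psi_2$ on the algebraic free product. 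In fact the $\psi_j$ are merely ucp.

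\textbf{What Step 2 gives, and what is missing.} Done correctly, Step 2 gives an isometry $V$ of the free product module into the Fock space with $\psi_j(a)V=V\lambda_j(a)$. Hence $\lambda(x)=V^\ast\Psi(x)V$. This yields the lower bound $\|x\|\le\|\Psi(x)\|$ and the ucp inverse on the range; your recursion idea is the right one for this half. The other half is that $\Psi$ is bounded, indeed completely positive, for the \emph{reduced} norm. This is the substantive part, and Step 3 leaves it unproved. ``An algebra map on each reduced word followed by a compression'' is not available here: off $V\mathcal K$ the operators $\psi_j(a)$ act by non-multiplicative compressions, exactly as on $\zeta$. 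You would need a dilation $\Psi(x)=W^\ast\pi(x)W$ with $\pi$ factoring through the reduced free product representation $\lambda$, or an equivalent norm estimate. The proposal has nothing in its place.
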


\begin{proof}[Proof of Corollary \ref{col: amalgamated free products}]
    It clearly suffices to prove the case where $J = \{1, 2\}$. By the assumed strong convergence, we have $(X_{i_1, 1}\sk \oplus 0, 0 \oplus X_{i_2, 2}\sk; E_1\sk \oplus E_2\sk)_{i_1 \in I_1, i_2 \in I_2}$ converges to $(X_{i_1, 1}\si \oplus 0, 0 \oplus X_{i_2, 2}\si; E_1\si \oplus E_2\si)_{i_1 \in I_1, i_2 \in I_2}$ strongly.
    
    By Theorem \ref{thm: main theorem} (or more precisely Remark \ref{rmk: Toeplitz prob space strong conv}), we have
    \begin{equation*}
    \begin{split}
        &(X_{i_1, 1}\sk \oplus 0, 0 \oplus X_{i_2, 2}\sk, T\sk, p\sk, u\sk; (E_1\sk \oplus E_2\sk) \circ E_\cT\sk)_{i_1 \in I_1, i_2 \in I_2}\\
        \to &(X_{i_1, 1}\si \oplus 0, 0 \oplus X_{i_2, 2}\si, T\si, p\si, u\si; (E_1\si \oplus E_2\si) \circ E_\cT\si)_{i_1 \in I_1, i_2 \in I_2}
    \end{split}
    \end{equation*}
    strongly, where $T\sk = T_{1 \otimes 1}\sk \in \cT(E_1\sk \oplus E_2\sk)$, $p\sk = 1 - (T\sk)^2((T\sk)^\ast)^2$, $u\sk = p\sk(T\sk + (T\sk)^\ast)p\sk$, and $E_\cT\sk: \cT(E_1\sk \oplus E_2\sk) \to A_1\sk \oplus A_2\sk$ is the canonical condition expectation, for each $k \in \N \cup \{\infty\}$. We shall use $\Psi\sk: A_1\sk \ast_{D\sk} A_2\sk \to p\sk\cT(E_1\sk \oplus E_2\sk)p\sk$ to denote the map given in Theorem \ref{thm: Toeplitz iso with AFP} as applied to $D\sk \subset A_1\sk$ and $D\sk \subset A_2\sk$, for each $k \in \N \cup \{\infty\}$.

    For each $j = 1, 2$, fix a suitable universal $C^\ast$-algebra $C_j$ generated by variables labeled by elements of $I_j$. Here, by ``suitable", we meant that it is possible to evaluate elements of $C_j$ on $(X_{i, j}\sk)_{i \in I_j}$ for every $k \in \N \cup \{\infty\}$. Now, elements of the following form yields a dense subset of $A_1\si \ast_{D\si} A_2\si$:
    
    \begin{equation*}
        t = E_1\si[g(X_{i, 1}\si)] + \sum_r x_1\sr x_2\sr\cdots x_{n_r}\sr
    \end{equation*}
    where the sum is a finite sum, $g \in C_1$, and each product $x_1\sr x_2\sr\cdots x_{n_r}\sr$ is in reduced form, i.e., $x_l\sr = f_l\sr(X_{i, j_l\sr}\si) \in A_{j_l\sr}\si \ominus D\si$ where $l \in \{1, \cdots, n_r\}$, $f_l\sr \in C_{j_l\sr}$, $j_l\sr \in \{1, 2\}$, and $j_1\sr \neq \cdots \neq j_{n_r}\sr$.

    Using Lemma \ref{lem: weak conv of AFP}, the reader may check that it suffices to prove the following claim:

    \begin{claim}
        \;
        \begin{equation*}
            \lim_{k \to \infty} \left\|E_1\sk(g(X_{i, 1}\sk)) + \sum_r \prod_{\ell = 1}^{n_r} f_l\sr(X_{i, j_l\sr}\sk)\right\| = \|t\|
        \end{equation*}
        where the product goes from left to right.
    \end{claim}

    We now proceed to prove the above claim. For each $k \in \N \cup \{\infty\}$, we shall write
    \begin{equation*}
        \sharp(X_{i, j}\sk) = \begin{cases}
            X_{i, j}\sk \oplus 0,\text{ if }j = 1\\
            0 \oplus X_{i, j}\sk,\text{ if }j = 2
        \end{cases}.
    \end{equation*}
    
    By our assumptions, $\lim_{k \to \infty} \left\|E_{j_l\sr}\sk(f_l\sr(X_{i, j_l\sr}\sk))\right\| = 0$. Thus,
    \scriptsize
    \begin{equation}\label{eqn: main eqn for AFP}
    \begin{split}
        \Bigg\|\Psi\sk&\left(E_1\sk(g(X_{i, 1}\sk)) + \sum_r \prod_{\ell = 1}^{n_r} f_l\sr(X_{i, j_l\sr}\sk)\right) -\\
        &\Bigg[p\sk[E_1\sk(g(X_{i, 1}\sk)) \oplus 0]p\sk + u\sk[E_1\sk(g(X_{i, 1}\sk)) \oplus 0]u\sk +\\
        &\sum_r\prod_{l = 1}^{n_r} \Bigg(p\sk\left[f_l\sr\left(\sharp\left(X_{i, j_l\sr}\sk\right)\right)\right]p\sk + u\sk\left[f_l\sr\left(\sharp\left(X_{i, j_l\sr}\sk\right)\right)\right]u\sk\Bigg)\Bigg]\Bigg\| \to 0
    \end{split}
    \end{equation}
    \normalsize
    where again the products go from left to right. Since
    \begin{equation*}
    \begin{split}
        &(X_{i_1, 1}\sk \oplus 0, 0 \oplus X_{i_2, 2}\sk, T\sk, p\sk, u\sk; (E_1\sk \oplus E_2\sk) \circ E_\cT\sk)_{i_1 \in I_1, i_2 \in I_2}\\
        \to &(X_{i_1, 1}\si \oplus 0, 0 \oplus X_{i_2, 2}\si, T\si, p\si, u\si; (E_1\si \oplus E_2\si) \circ E_\cT\si)_{i_1 \in I_1, i_2 \in I_2}
    \end{split}
    \end{equation*}
    strongly, we must have the norm of the the term starting in the second line of Equation (\ref{eqn: main eqn for AFP}) converges to the norm of the same term with $k$ replaced by $\infty$. But that is exactly $\Psi\si(t)$, so
    \begin{equation*}
        \left\|\Psi\sk\left(E_1\sk(g(X_{i, 1}\sk)) + \sum_r \prod_{\ell = 1}^{n_r} f_l\sr(X_{i, j_l\sr}\sk)\right)\right\| \to \|\Psi\si(t)\|.
    \end{equation*}

    But $\Psi\sk$ and $\Psi\si$ are unital complete order isomorphisms, so in particular isometric. The claim follows.
\end{proof}

\begin{rmk}\label{rmk: application to groups}
    We document here an application of Corollary \ref{col: amalgamated free products} in the context of the recent work \cite{gao2026newsourcepurelyfinite}. In Lemma 2.3 therein, the setup consists of a group $G$, a subgroup $H < G$, a decreasing sequence of subgroups $H_n < G$ with $\bigcap_n H_n = H$, a normal subgroup $K_n \triangleleft G$ that is also contained in $H_n$ for each $n$, and another group $L$. The goal is to prove the sequence of group homomorphisms
    \begin{equation*}
        \pi_n: G \ast_H (H \times L) \to [G/K_n \ast_{H_n/K_n} (H_n/K_n \times L)] \times G
    \end{equation*}
    defined by sending $g \in G$ to $(gK_n, g)$ and $l \in L$ to $(l, e)$ induces an embedding
    \begin{equation}\label{eqn: embedding for groups}
        C_r^\ast(G \ast_H (H \times L)) \to \prod_\cU C_r^\ast([G/K_n \ast_{H_n/K_n} (H_n/K_n \times L)] \times G).
    \end{equation}

    To apply our Corollary \ref{col: amalgamated free products}, we note that both the LHS and the RHS of the above can be written as amalgamated free products. Indeed,
    \begin{equation*}
        C_r^\ast(G \ast_H (H \times L)) = C_r^\ast(G) \ast_{C_r^\ast(H)} C_r^\ast(H \times L)
    \end{equation*}
    and
    \begin{equation*}
    \begin{split}
        &C_r^\ast([G/K_n \ast_{H_n/K_n} (H_n/K_n \times L)] \times G)\\
        = &C_r^\ast(G/K_n \times G) \ast_{C_r^\ast(H_n/K_n \times G)} C_r^\ast(H_n/K_n \times G \times L).
    \end{split}
    \end{equation*}

    The reader may then check that
    \begin{equation*}
    \begin{split}
        &(\lambda_{(gK_n, g)}; E_{C_r^\ast(H_n/K_n \times G)})_{g \in G}\text{ in }C_r^\ast(G/K_n \times G)\\
        \to&(\lambda_g; E_{C_r^\ast(H)})_{g \in G}\text{ in }C_r^\ast(G)
    \end{split}
    \end{equation*}
    strongly; and
    \begin{equation*}
    \begin{split}
        &(\lambda_{(hK_n, h, e)}, \lambda_{(e, e, l)}; E_{C_r^\ast(H_n/K_n \times G)})_{h \in H, l \in L}\text{ in }C_r^\ast(H_n/K_n \times G \times L)\\
        \to&(\lambda_{(h, e)}, \lambda_{(e, l)}; E_{C_r^\ast(H)})_{h \in H, l \in L}\text{ in }C_r^\ast(H \times L)
    \end{split}
    \end{equation*}
    strongly. The two strong convergences are also consistent on $C_r^\ast(H_n/K_n \times G)$. The embedding in Equation (\ref{eqn: embedding for groups}) now follows from Corollary \ref{col: amalgamated free products}. We additionally emphasize to the reader that, while Corollary \ref{col: amalgamated free products} can replace, in the above manner, the isomorphism theorem \cite[Theorem 2.2]{gao2026newsourcepurelyfinite} used in the proof of \cite[Lemma 2.3]{gao2026newsourcepurelyfinite}, it by no means is a simplification of the argument. In the above, one still has to build on the key new insights of \cite{gao2026newsourcepurelyfinite} to use separable subgroups and Fell's absorption principle, allowing one to consider the appropriate group homomorphisms $\pi_n$ which are in a form suitable for the application of the machinery of Toeplitz-Pimsner algebras. Moreover, as we have seen above, the proof of the isomorphism theorem \cite[Theorem 2.2]{gao2026newsourcepurelyfinite} is much shorter and transparent than the proof of Corollary \ref{col: amalgamated free products} in the current work.
\end{rmk}

For HNN extensions, we again clarify the definitions first:

\begin{defn}
    Fix an index set $I$. Let $A\sk$ be a $C^\ast$-algebra equipped with a faithful state $\rho\sk$, $B_1\sk, B_{-1}\sk \subset A\sk$ be subalgebras admitting state-preserving conditional expectations $E_1\sk$ and $E_{-1}\sk$, resp., and $\theta\sk: B_1\sk \to B_{-1}\sk$ be a state-preserving $\ast$-isomorphism; and let $(X_i\sk)_{i \in I}$ be a tuple in $A\sk$, for each $k \in \N \cup \{\infty\}$. Assume further that $A\si$ is generated by $(X_i\si)_{i \in I}$. We say $(X_i\sk; \rho\sk, E_1\sk, E_{-1}\sk, \theta\sk)_{i \in I}$ \emph{converges strongly} to $(X_i\si; \rho\si, E_1\si, E_{-1}\si, \theta\si)_{i \in I}$ if,
    \begin{enumerate}
        \item for each $j \in \{\pm 1\}$, $(X_i\sk; E_j\sk)_{i \in I} \to (X_i\si; E_j\si)_{i \in I}$ strongly in the sense defined in part (2) of Examples \ref{examps: examples for strong conv};
        \item for any $\ast$-polynomial $P$,
        \begin{equation*}
            \lim_{k \to \infty} \rho\sk(P(X_i\sk)) = \rho\si(P(X_i\si));\text{ and,}
        \end{equation*}
        \item for any $\ast$-polynomials $P, Q$,
        \footnotesize
        \begin{equation*}
            \lim_{k \to \infty} \|\theta\sk(E_1\sk(P(X_i\sk))) - E_{-1}\sk(Q(X_i\sk))\| = \|\theta\si(E_1\si(P(X_i\si))) - E_{-1}\si(Q(X_i\si))\|.
        \end{equation*}
        \normalsize
    \end{enumerate}
\end{defn}

\begin{rmk}
    We leave it to the reader to check that the definition above is equivalent to the following: For each free ultrafilter $\cU$ on $\N$, there exists an embedding $\pi: A\si \to \prod_\cU A\sk$ such that
    \begin{enumerate}
        \item $\pi(X_i\si) = (X_i\sk)_\cU$ for all $i \in I$;
        \item for $a \in A\si$, if $\pi(a) = (a\sk)_\cU$, then $\lim_{k \to \cU} \rho\sk(a\sk) = \rho\si(a)$;
        \item for $a \in A\si$, $j \in \{\pm 1\}$, if $\pi(a) = (a\sk)_\cU$, then $\pi(E_j\si(a)) = (E_j\sk(a\sk))_\cU$; and,
        \item for $b \in B_1\si$, if $\pi(b) = (b\sk)_\cU$ with $b\sk \in B_1\sk$ for all $k$, then $\pi(\theta\si(b)) = (\theta\sk(b\sk))_\cU$.
    \end{enumerate}

    We note that condition (4) is equivalent to the version with $j = -1$ instead of $j = 1$.
\end{rmk}

To link HNN extensions with amalgamated free products, we need the following description of HNN extensions from \cite{gao2026selflessreducedamalgamatedfree}:

\begin{lem}{\cite[Remark 3.8]{gao2026selflessreducedamalgamatedfree}}\label{lem: structure of HNN}
    Consider $C = ((A \ast A) \rtimes \Z/2\Z) \ast_{(B_1 \ast B_{-1})} ((B_1 \ast B_{-1}) \rtimes \Z/2\Z)$ where the generator $u$ of the first $\Z/2\Z$ swaps the two free components and the generator $v$ of the second $\Z/2\Z$ swaps the two free components via $\theta$. Then $\HNN(A, \theta, B_1, B_{-1})$ is generated by the first free component $A$ in $C$ and $w = uv$, where $w$ is the unitary implementing $\theta$ in the HNN extension.
\end{lem}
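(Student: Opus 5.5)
The plan is to identify $C^\ast(A, w) \subset C$ with the reduced HNN extension through the uniqueness property of the latter. I use the description in \cite{Ueda2008Remarks}: $\HNN(A, \theta, B_1, B_{-1})$ is generated by $A$ and a unitary $w$ with $w b w^\ast = \theta(b)$ for $b \in B_1$. It carries a conditional expectation $E_A$ onto $A$ that is GNS-faithful (faithful, since $\rho$ is faithful). $E_A$ annihilates every reduced word $a_0 w^{\varepsilon_1} a_1 w^{\varepsilon_2} \cdots w^{\varepsilon_n} a_n$ with $n \geq 1$, where each inner $a_l$ lies in $\ker E_{\varepsilon_l}$ or $\ker E_{-\varepsilon_{l+1}}$ whenever $\varepsilon_{l+1} = -\varepsilon_l$ (Britton-type reducedness). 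Any $C^\ast$-algebra generated by a copy of $A$ and such a unitary, with a GNS-faithful expectation onto $A$ having these vanishing properties, is canonically isomorphic to the reduced HNN extension, sending $w$ to $w$. So it suffices to produce this structure inside $C$.

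\textbf{Step 1: the covariance relation.} Write $A^{(1)}, A^{(2)}$ for the two free copies of $A$. Then $u$ swaps them, and $v$ sends $b \in B_1^{(1)}$ to $\theta(b) \in B_{-1}^{(2)}$ (and symmetrically sends $B_{-1}^{(1)}$ to $B_1^{(2)}$ via $\theta^{-1}$). Put $w = uv$. For $b \in B_1^{(1)}$ we get $w b w^\ast = u(v b v)u = u\,\theta(b)^{(2)} u = \theta(b)^{(1)}$. Thus $w$ implements $\theta$ on the first copy.

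\textbf{Step 2: the expectation.} Let $E$ be the composite of three expectations: the free product expectation $C \to (A\ast A)\rtimes \Z/2\Z$ over $B_1 \ast B_{-1}$, then the crossed product expectation onto $A \ast A$, then the free product expectation $A \ast A \to A^{(1)}$ (defined via $\rho$ on the second copy). $E$ is GNS-faithful because every ingredient is GNS-faithful and $\rho$ is faithful. The standard results on faithfulness of reduced (amalgamated) free product states give this.

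\textbf{Step 3: vanishing on reduced words (the main obstacle).} Take a reduced word $a_0 w^{\varepsilon_1} a_1 \cdots w^{\varepsilon_n} a_n$ and substitute $w = uv$, $w^\ast = vu$. I would push all $u$'s to the right using $u x u = x^{\mathrm{swap}}$. Because each $w^{\pm1}$ contributes exactly one $u$, the letters $a_l$ alternate between the copies $A^{(1)}$ and $A^{(2)}$ in a controlled way, separated by $v$'s, and a single power $u^{m}$ is left at the end. The reducedness conditions say that at each place where $w^{\varepsilon}w^{-\varepsilon}$ would cancel ($uv\,a\,vu$ or $vu\,a\,uv$), the sandwiched letter lies in the kernel of the relevant $E_{\pm1}$. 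Its image in the corresponding copy therefore lies in $\ker(E_{B_1\ast B_{-1}})$ inside $(A\ast A)\rtimes\Z/2\Z$, or it combines with a neighbour into such an element.

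The word then becomes an alternating product of elements centred in $(A\ast A)\rtimes\Z/2\Z \ominus (B_1\ast B_{-1})$ and in $((B_1\ast B_{-1})\rtimes\Z/2\Z) \ominus (B_1 \ast B_{-1})$; the latter are $v$ times elements of $B_1\ast B_{-1}$. Freeness with amalgamation kills it under the first expectation, except for boundary terms. Those boundary terms will be handled by the crossed product expectation, since a leftover odd power of $u$ is killed, and by the free product expectation onto $A^{(1)}$, since leftover letters of $A^{(2)}$ centred in the free product give zero. I expect the bookkeeping of which letters need to be re-centred (subtracting $E_{\pm1}$ parts and absorbing them into neighbours) to be the technical heart of the proof.

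Once Steps 1 to 3 are done, the uniqueness property of the reduced HNN extension yields $\HNN(A,\theta,B_1,B_{-1}) \cong C^\ast(A^{(1)}, w) \subset C$, with the implementing unitary sent to $w = uv$.
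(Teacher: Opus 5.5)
The paper gives no proof of this lemma; it quotes it from Remark 3.8 of \cite{gao2026selflessreducedamalgamatedfree}. So your proposal is a self-contained proof, and its strategy is sound: build an expectation $C\to A^{(1)}$ that kills Britton-reduced words in $A^{(1)}$ and $w$, then invoke the characterization of Ueda's reduced HNN extension by a GNS-faithful such expectation. Step 3 is, however, partly misdescribed, and Step 2 checks the wrong faithfulness property.

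\textbf{Step 3.} You cannot push the $u$'s to the right. The relation $uxu=x^{\mathrm{swap}}$ holds only for $x\in A\ast A$, and $uvu$ is a reduced word of length three in $C$, not an element of $C_2=(B_1\ast B_{-1})\rtimes\Z/2\Z$. So there is no normal form of the shape ``letters separated by $v$'s, times $u^m$''. What works is the local grouping you sketch afterwards, and it is easier than you expect. Set $D=B_1\ast B_{-1}$, $C_1=(A\ast A)\rtimes\Z/2\Z$, $F_1=(E_1\ast E_{-1})\circ E_\rtimes$, and let $F_2$ be the crossed-product expectation of $C_2$. Substituting $w=uv$, $w^\ast=vu$ gives $c_0vc_1v\cdots vc_n$ with $c_l\in C_1$. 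For $0<l<n$, the letter $c_l$ equals $a_l^{(1)}u$, $ua_l^{(1)}$, $a_l^{(1)}$ or $ua_l^{(1)}u=a_l^{(2)}$, according as $(\varepsilon_l,\varepsilon_{l+1})$ is $(1,1)$, $(-1,-1)$, $(1,-1)$ or $(-1,1)$.
\begin{itemize}
\item The first two cases lie in $\ker E_\rtimes$.
\item The last two lie in $\ker F_1$, because $F_1(a^{(1)})=E_1(a)^{(1)}$, $F_1(a^{(2)})=E_{-1}(a)^{(2)}$, and the word is reduced.
\item At the ends, split $c_0=(c_0-F_1(c_0))+F_1(c_0)$, and $c_n$ likewise. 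Absorb the $D$-parts into the adjacent $v$, using $F_2(dvd')=dF_2(v)d'=0$.
\end{itemize}
The word is then a sum of alternating products of elements of $\ker F_1$ and $\ker F_2$, each containing $n\ge 1$ letters from $\ker F_2$. Hence it is already killed by the expectation $C\to C_1$. No boundary terms survive to be handled by $E_\rtimes$ or $\mathrm{id}\ast\rho$; those later stages serve only to land in $A^{(1)}$.

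\textbf{Step 2.} The characterization needs GNS-faithfulness of $E$ restricted to $C^\ast(A^{(1)},w)$, and GNS-faithfulness does not pass to subalgebras. For example, a vector state on $\M_2$ has a faithful GNS representation, but its restriction to the diagonal does not. So showing that a composite of GNS-faithful expectations is GNS-faithful on $C$ is not enough; you should prove that $E$ is faithful, which then restricts.
\begin{itemize}
\item First check that $\rho\circ E=\phi_D\circ E_D$, where $\phi_D=\rho|_{B_1}\ast\rho|_{B_{-1}}$.
\item The component states $\phi_D\circ F_1=(\rho\ast\rho)\circ E_\rtimes$ and $\phi_D\circ F_2$ are faithful, by Dykema's theorem and $\rho\circ\theta=\rho$.
\item The amalgamated analogue of Dykema's theorem (faithful component states give a faithful free product state) then makes $\rho\circ E$ faithful, hence $E$ faithful.
\end{itemize}

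\textbf{Step 1.} There is a small slip: $v$ carries $B_{-1}^{(2)}$, not $B_{-1}^{(1)}$, onto $B_1^{(1)}$ via $\theta^{-1}$.
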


\begin{proof}[Proof of Corollary \ref{col: HNN extensions}]
    We will follow the notations as in Lemma \ref{lem: structure of HNN}, with superscripts $(k)$, $k \in \N \cup \{\infty\}$, added. For clarity, let us write the two free copies of $A\sk$ in $A\sk \ast A\sk \subset C\sk$ as $A_1\sk$ and $A_{-1}\sk$, resp. For the tuple $(X_i\sk)_{i \in I}$ in $A\sk$, we will write $(X_{i, j}\sk)_{i \in I}$ for the tuple as elements in $A_j\sk$, $j = \pm 1$.

    By Corollary \ref{col: amalgamated free products}, $(X_{i, j}\sk)_{i \in I, j = \pm 1}$ in $A_1\sk \ast A_{-1}\sk$ converges strongly to $(X_{i, j}\si)_{i \in I, j = \pm 1}$ in $A_1\si \ast A_{-1}\si$. By a straightforward combinatorial argument similar to the classical proof of weak convergence, the reader may check that the assumptions on strong convergence of $E_1\sk$ and $E_{-1}\sk$ imply
    \begin{equation*}
        (X_{i, j}\sk; E_1\sk \ast E_{-1}\sk)_{i \in I, j = \pm 1} \to (X_{i, j}\si; E_1\si \ast E_{-1}\si)_{i \in I, j = \pm 1}
    \end{equation*}
    strongly.

    Note that for each $k \in \N \cup \{\infty\}$, $(A_1\sk \ast A_{-1}\sk) \rtimes \Z/2\Z$ is contained in $\M_2(A\sk \ast A\sk)$, where $A_1\sk \ast A_{-1}\sk$ is embedded as,
    \begin{equation*}
        \begin{bmatrix}
            A_1\sk \ast A_{-1}\sk & 0\\
            0 & A_{-1}\sk \ast A_1\sk
        \end{bmatrix}
    \end{equation*}
    and $u\sk$ is given by
    \begin{equation*}
        \begin{bmatrix}
            0 & 1\\
            1 & 0
        \end{bmatrix}
    \end{equation*}
    with the expectation onto $A_1\sk \ast A_{-1}\sk$ given by picking out the $(1, 1)$-corner. From this description and using the ultraproduct characterization of strong convergence (see the last paragraph of part (2) of Examples \ref{examps: examples for strong conv}), we see that
    \begin{equation}\label{eqn: HNN LHS strong conv}
        (X_{i, j}\sk, u\sk; E_{B_1\sk \ast B_{-1}\sk}\sk)_{i \in I, j = \pm 1} \to (X_{i, j}\si, u\si; E_{B_1\si \ast B_{-1}\si}\si)_{i \in I, j = \pm 1}
    \end{equation}
    strongly.

    On the other hand, the tuple $(E_1\si(P(X_{i, 1}\si)))_P$, where $P$ ranges over all $\ast$-polynomials, generates $B_1\si$; and the tuple $(\theta(E_1\si(P(X_{i, 1}\si))))_P$ generates $B_{-1}\si$. The assumptions on strong convergence of $\rho\sk$, $E_1\sk$, and $\theta\sk$ imply
    \begin{equation*}
        (E_1\sk(P(X_{i, 1}\sk)); \rho\sk)_P \to (E_1\si(P(X_{i, 1}\si)); \rho\si)_P
    \end{equation*}
    and
    \begin{equation*}
        (\theta\sk(E_1\sk(P(X_{i, 1}\sk))); \rho\sk)_P \to (\theta\si(E_1\si(P(X_{i, 1}\si))); \rho\si)_P
    \end{equation*}
    strongly.

    Now by Corollary \ref{col: amalgamated free products} and following similar arguments as in previous paragraphs for the $\Z/2\Z$-action, we obtain,
    \begin{equation}\label{eqn: HNN RHS strong conv}
    \begin{split}
        &(E_1\sk(P(X_{i, 1}\sk)), \theta\sk(E_1\sk(P(X_{i, 1}\sk))), v\sk; E_{B_1\sk \ast B_{-1}\sk}\sk)_P\\
        \to &(E_1\si(P(X_{i, 1}\si)), \theta\si(E_1\si(P(X_{i, 1}\si))), v\si; E_{B_1\si \ast B_{-1}\si}\si)_P
    \end{split}
    \end{equation}
    strongly.

    That the strong convergences in Equations (\ref{eqn: HNN LHS strong conv}) and (\ref{eqn: HNN RHS strong conv}) are consistent on the amalgams follows by unpacking the relevant definitions and using the ultraproduct characterization given in Remark \ref{rmk: ultraprod char of consistent strong conv}. The result now follows from Corollary \ref{col: amalgamated free products} and Lemma \ref{lem: structure of HNN}.
\end{proof}

\begin{rmk}\label{rmk: weak conv of HNN}
    We note that, using in addition a combinatorial argument similar to the classical proof of weak convergence, we can obtain the stronger conclusion that $(X_i\sk, w\sk; E_{A\sk}\sk)_{i \in I}$ converges strongly to $(X_i\si, w\si; E_{A\si}\si)_{i \in I}$. Here, $E_{A\sk}\sk$ is the conditional expectation from $\HNN(A\sk, \theta\sk, B_1\sk, B_{-1}\sk)$ onto $A\sk$, inherited from $C\sk$, for all $k \in \N \cup \{\infty\}$. We leave it to the reader to check the details.
\end{rmk}

\bibliographystyle{amsalpha}
\bibliography{bibliography}

@article{gao2026newsourcepurelyfinite,
      title={A new source of purely finite matricial fields}, 
      author={Gao, David and {Kunnawalkam Elayavalli}, Srivatsav and Manzoor, Aareyan and Patchell, Gregory},
      year={2026},
      journal={arXiv:2603.24502}
}

@article{male2011norm,
  title={The norm of polynomials in large random and deterministic matrices},
  author={Male, Camille},
  journal={Probability Theory and Related Fields},
  volume={152},
  number={1-2},
  pages={257--331},
  year={2011},
  publisher={Springer},
  doi={10.1007/s00440-011-0375-2}
}

@incollection{pimsner1997class,
  title={A class of {$C^*$}-algebras generalizing both {Cuntz-Krieger} algebras and crossed products by {$\mathbb{Z}$}},
  author={Pimsner, Mihai V},
  booktitle={Free probability theory ({Waterloo, ON, 1995})},
  volume={12},
  pages={189--212},
  year={1997},
  publisher={Amer. Math. Soc.},
  address={Providence, RI},
  series={Fields Inst. Commun.},
  doi={10.1090/fic/012/08}
}

@article{gao2026selflessreducedamalgamatedfree,
      title={Selfless reduced amalgamated free products and {HNN} extensions}, 
      author={David Gao and {Kunnawalkam Elayavalli}, Srivatsav and Gregory Patchell and Lizzy Teryoshin},
      year={2026},
      journal={arXiv: 2604.06982}
}

@article{skoufranis2015notion,
  title={On a notion of exactness for reduced free products of {$C^*$}-algebras},
  author={Skoufranis, Paul},
  journal={Journal f{\"u}r die reine und angewandte Mathematik (Crelles Journal)},
  volume={2015},
  number={700},
  pages={129--153},
  year={2015},
  publisher={De Gruyter},
  doi={10.1515/crelle-2013-0014}
}

@article {PisierRDP,
    AUTHOR = {Pisier, Gilles},
     TITLE = {Strong convergence for reduced free products},
   JOURNAL = {Infin. Dimens. Anal. Quantum Probab. Relat. Top.},
  FJOURNAL = {Infinite Dimensional Analysis, Quantum Probability and Related
              Topics},
    VOLUME = {19},
      YEAR = {2016},
    NUMBER = {2},
     PAGES = {1650008, 22},
      ISSN = {0219-0257,1793-6306},
   MRCLASS = {46L54 (46L07)},
  MRNUMBER = {3511889},
MRREVIEWER = {Franz\ Luef},
       DOI = {10.1142/S0219025716500089},
       URL = {https://doi.org/10.1142/S0219025716500089},
}

@article{FowlerMuhlyRaeburn2003,
  author  = {Neal J. Fowler and Paul S. Muhly and Iain Raeburn},
  title   = {Representations of {C}untz-{P}imsner algebras},
  journal = {Indiana Univ. Math. J.},
  volume  = {52},
  number  = {3},
  pages   = {569--605},
  year    = {2003},
  doi     = {10.1512/iumj.2003.52.2125},
  url     = {https://www.iumj.indiana.edu}
}

@article{shlyakhtenko1999valued,
  title={A-valued semicircular systems},
  author={Shlyakhtenko, Dimitri},
  journal={J. Funct. Anal.},
  volume={166},
  number={1},
  pages={1--47},
  year={1999},
  publisher={Elsevier},
  doi={10.1006/jfan.1999.3424},
  url={https://doi.org}
}

@article{HaagerupThorbjornsen2005,
  author  = {Haagerup, Uffe and Thorbj{\o}rnsen, Steen},
  title   = {A new application of random matrices: {E}xt({$C^*_{\text{red}}(\mathbb{F}_2)$}) is not a group},
  journal = {Ann. of Math.},
  series  = {Second Series},
  volume  = {162},
  number  = {2},
  pages   = {711--775},
  year    = {2005},
  publisher = {Ann. of Math.},
  doi     = {10.4007/annals.2005.162.711},
  url     = {https://annals.math.princeton.edu}
}

@article{vanhandel2025strongconvergenceshortsurvey,
      title={Strong convergence: a short survey}, 
      author={Ramon van Handel},
      year={2025},
      eprint={2510.12520},
      archivePrefix={arXiv},
      primaryClass={math.PR},
      url={https://arxiv.org/abs/2510.12520}, 
      journal={arXiv:2510.12520},
}

@article {KhinRX, AUTHOR = {Ricard, \'{E}ric and Xu, Quanhua}, TITLE = {Khintchine type inequalities for reduced free products and applications}, JOURNAL = {J. Reine Angew. Math.}, FJOURNAL = {Journal f\"{u}r die Reine und Angewandte Mathematik. [Crelle's Journal]}, VOLUME = {599}, YEAR = {2006}, PAGES = {27--59}, ISSN = {0075-4102}, MRCLASS = {46L09 (46L05 46L10)}, MRNUMBER = {2279097}, MRREVIEWER = {Maria Grazia Viola}, DOI = {10.1515/CRELLE.2006.077}, URL = {https://doi.org/10.1515/CRELLE.2006.077}, }

@article{Ueda2008Remarks,
  author    = {Ueda, Yoshimichi},
  title     = {Remarks on {HNN} extensions in operator algebras},
  journal   = {Illinois Journal of Mathematics},
  volume    = {52},
  number    = {3},
  pages     = {705--725},
  year      = {2008},
  doi       = {10.1215/ijm/1254143997}}

@article{magee2025strong,
  title   = {Strong convergence of unitary and permutation representations of discrete groups},
  author  = {Magee, Michael},
  journal = {arXiv:2503.21619},
  year    = {2025}}

@article{dykema2001exactness,
  title={Exactness of {Cuntz}--{Pimsner} {$C^*$}-algebras},
  author={Dykema, Ken and Shlyakhtenko, Dimitri},
  journal={Proc. Edinb. Math. Soc.},
  volume={44},
  number={2},
  pages={425--444},
  year={2001},
  publisher={Cambridge University Press}
}

@incollection{voiculescu1985symmetries,
  author    = {Voiculescu, Dan},
  title     = {Symmetries of some reduced free product {$C^*$}-algebras},
  booktitle = {Operator Algebras and Their Connections with Topology and Ergodic Theory},
  series    = {Lecture Notes in Mathematics},
  volume    = {1132},
  pages     = {556--588},
  year      = {1985},
  publisher = {Springer-Verlag},
  address   = {Berlin, Heidelberg},
  doi       = {10.1007/BFb0074909}
}

@article{ozawa2025proximalityselflessnessgroupcalgebras,
      title={Proximality and selflessness for group {$C^*$}-algebras}, 
      author={Narutaka Ozawa},
      year={2025},
      eprint={2508.07938},
      archivePrefix={arXiv},
      primaryClass={math.OA},
      journal={arXiv:2508.07938},
      url={https://arxiv.org/abs/2508.07938},
}

@article {robertselfless,
    AUTHOR = {Robert, Leonel},
     TITLE = {Selfless {$C^*$}-algebras},
   JOURNAL = {Adv. Math.},
  FJOURNAL = {Advances in Mathematics},
    VOLUME = {478},
      YEAR = {2025},
     PAGES = {Paper No. 110409},
      ISSN = {0001-8708,1090-2082},
   MRCLASS = {46L05 (46L35 46L54)},
       DOI = {10.1016/j.aim.2025.110409},
       URL = {https://doi.org/10.1016/j.aim.2025.110409},
}

@article{amrutam2025strictcomparisonreducedgroup,
      title={Strict comparison in reduced group {$C^*$}-algebras}, 
      author={Amrutam, Tattwamasi  and Gao, David and {Kunnawalkam Elayavalli}, Srivatsav and Patchell, Gregory},
      year={2025},
      journal={Invent. Math},
      volume={242},
      pages={639-657},
      doi={10.1007/s00222-025-01366-5},
      url={https://doi.org/10.1007/s00222-025-01366-5}
}

@article{jekel2025strongconvergenceoperatorvaluedsemicirculars,
      title={Strong convergence to operator-valued semicirculars}, 
      author={David Jekel and Yoonkyeong Lee and Brent Nelson and Jennifer Pi},
      year={2025},
      journal={arXiv:2506.19940}
}

@book{brown2008textrm,
  title={{$C^*$}-Algebras and Finite-Dimensional Approximations},
  author={Brown, Nathanial Patrick and Ozawa, Narutaka},
  volume={88},
  year={2008},
  publisher={Amer. Math. Soc.}
}

\end{document}